\newtheorem{theorem}{Theorem}[section]
\newtheorem{lemma}[theorem]{Lemma}
\newtheorem{proposition}[theorem]{Proposition}
\newtheorem{conjecture}[theorem]{Conjecture}
\theoremstyle{definition}
\newtheorem{definition}[theorem]{Definition}
\newcommand{\dif}{\operatorname{d}\!}
\title{
A Proof of The Changepoint Detection Threshold Conjecture in Preferential Attachment Models}
\author{Hang Du\\{MIT} \and Shuyang Gong\\{ Peking University} \and Jiaming Xu\\ Duke University}
\date{\today}
\begin{document}
\maketitle

\begin{abstract}
    We investigate the problem of detecting and estimating a changepoint in the attachment function of a network evolving according to a preferential attachment model on  $n$ vertices, using only a single final snapshot of the network. Bet et al.~\cite{bet2023detecting} show that a simple test based on thresholding the number of vertices with minimum degrees can detect the changepoint when the change occurs at time $n-\Omega(\sqrt{n})$. They further make the striking conjecture that detection becomes impossible for any test if the change occurs at time $n-o(\sqrt{n}).$  Kaddouri et al.~\cite{kaddouri2024impossibility} make a step forward by proving the detection is impossible if the change occurs at time $n-o(n^{1/3}).$ In this paper, we resolve the conjecture affirmatively,  proving that detection is indeed impossible if the change occurs at time $n-o(\sqrt{n}).$ 
Furthermore, we establish that estimating the changepoint with an error smaller than $o(\sqrt{n})$ is also impossible, thereby confirming that the estimator proposed in Bhamidi et al.~\cite{bhamidi2018change} is order-optimal. \footnote{Accepted for presentation at the Conference on Learning Theory (COLT) 2025}
\end{abstract}

\section{Introduction}
\label{sec-intro}

This paper addresses the problem of detecting and estimating a changepoint in the underlying growth dynamics of a network based solely on its final snapshot. Specifically, suppose a network evolves according to the following preferential attachment model~\cite{Hofstad_2016}. It starts with an initial graph $G_2$ consisting of two vertices connected by $m$ parallel edges. At every subsequent time step $3 \le t \le n$,  a new vertex arrives and forms $m$ new incident edges. These edges connect to the pre-existing vertices independently with probabilities proportional to their \emph{current degrees} plus an additive shift $\delta(t)$. Under this formalism, incoming vertices are more likely to attach to vertices that already have a large degree, further increasing the degree of these vertices, thereby capturing ``the rich-get-richer'' phenomenon commonly observed in real-world networks.
The introduction of parameter $\delta(t)$ enables interpolation between the celebrated Barab\'{a}si-Albert model~\cite{barabasi1999emergence} ($\delta(t) \equiv 0$) with the uniform attachment model ($\delta(t) \equiv \infty$). It is evident that the  smaller $\delta_t$, the stronger preference for high-degree vertices.

In real-world networks, the extent of degree preference, captured by $\delta(t)$,  may change due to the occurrence of some major event or the implementation of some new policy or intervention~\cite{cirkovic2022likelihood}. Thus, a natural yet fundamental question is whether such a change can be detected from  network data. As such, we consider the following hypothesis testing problem where under the null hypothesis $\mathcal{H}_0$, $\delta(t) \equiv \delta$ remains constant, while under the alternative hypothesis $\mathcal{H}_1$, $\delta(t)$ changes from $\delta$ to $\delta'$ at an unknown time $\tau_n$, referred to as the changepoint. The goal is to test these hypotheses based solely on the final snapshot of the network, $G_n$. See~Fig.~\ref{fig:PA_detection} for a graphical illustration. Another closely related statistical task is to estimate the changepoint time $\tau_n$ under $\mathcal{H}_1$. Crucially, we do not observe the sequence of graphs $G_t$ generated during $2 \le t \le n-1$. In particular, the arrival times of vertices in $G_n$ are unknown. This setup captures practical yet challenging scenarios where some significant events or interventions may alter the network's evolution, but only the final snapshot of the network is observed. 

This hypothesis testing problem was first formulated and studied in~\cite{bet2023detecting}. Intuitively,  detecting the changepoint becomes increasingly difficult as $\tau_n$ gets larger because the change impacts a smaller portion of the network.\footnote{This can be formalized through an application of the data-processing inequality.
Specifically, let $\mathbb{Q}_{n,t}$ denote the distribution of $G_n$ with the changepoint at time $t$. 
Let $\tau_n \le t_n$ and consider the transformation  that regenerates the network from $\tau_n$ to $t_n$ with $\delta(t) =\delta$. Then this transformation maps distribution $\mathbb{Q}_{n,n}$ to itself and $\mathbb{Q}_{n,\tau_n}$ to $\mathbb{Q}_{n,t_n}$. Applying the data-processing inequality (see e.g.~\cite[Theorem 7.4]{polyanskiy2025information}), it follows that
$\mathsf{TV}(\mathbb{Q}_{n,n}, \mathbb{Q}_{n,t_n}) \le\mathsf{TV}(\mathbb{Q}_{n,n}, \mathbb{Q}_{n,\tau_n}). $}  
In other words, we would like to detect the change as quickly as possible~\cite{veeravalli2014quickest,xie2021sequential}. Accordingly, \cite{bet2023detecting} focuses on the \emph{late-change regime}, assuming $\tau_n=n-\Delta$ with $\Delta=c n^\gamma$, where  $c>0$ and $\gamma \in (0,1).$ The authors proposed a minimum-degree test, based on thresholding the number of vertices with minimum degree $m$ in the observed network. They prove that the mean number of degree-$m$ vertices under $\mathcal{H}_0$ and $\mathcal{H}_1$ differ by 
$\Theta(n^\gamma)$, while fluctuations under both hypotheses are $O(\sqrt{n})$.  As a result,  the minimum-degree test achieves \emph{strong detection} (with vanishing Type-I and Type-II errors as $n \to \infty$) if $\gamma >1/2$ and \emph{weak detection} (strictly better than random guessing, with the sum of Type-I and Type-II errors bounded away from $1$ as $n \to \infty$) if $\gamma = 1/2.$  Remarkably, the authors further conjecture that all tests are powerless and fail in weak detection when $\gamma<1/2$. 
\begin{conjecture}~\cite[Conjecture 3.2]{bet2023detecting}
If $\gamma<1/2$, then all tests based on $G_n$ are powerless, that is, the sum of Type-I and Type-II errors converges to $1$ as $n \to \infty.$    
\end{conjecture}
This conjecture is rather striking, because, if true, it implies that neither degree information nor any higher-level graph structure is useful for detection when $\gamma<1/2$. Recently, \cite{kaddouri2024impossibility} made significant progress toward resolving the conjecture by proving the impossibility of strong detection when $\Delta=o(n^{1/3})$ for $\delta>0$  or $\Delta=o(n^{1/3}/\log n)$ for $\delta=0$.
 However, their techniques do not extend to the regime where $\Delta=\Omega(n^{1/3})$ or $\delta<0.$ 
 Moreover, their results do not rule out the possibility of weak detection.

In this paper, we completely resolve the conjecture by proving that weak detection is impossible when $\Delta=o(n^{1/2})$. In fact, our proof implies a much stronger statement, showing that weak detection remains impossible even if, in addition to $G_n,$ the entire network history $G_t$ were observed up to time $t$ such that $\Delta^2 \ll n-t \ll n$. Our impossibility of detection further implies that no estimator can pinpoint the changepoint time $\tau_n$ with an additive error of $o(\sqrt{n})$ with $\Omega(1)$ probability. This shows that the estimator $\widetilde{\tau}_n$ proposed in~\cite{bhamidi2018change}, which achieves $|\widetilde{\tau}_n-\tau_n|= O(\sqrt n)$ with high probability, is order-optimal.\\

\subsection{Further related work}
The preferential attachment (PA) model and its generalization are arguably the most widely adopted randomly growing graph models. Since its introduction in~\cite{barabasi1999emergence}, the PA model has received a tremendous amount of attention thanks to its simplicity of the local connection rules and its explanatory power of the power-law degree distribution commonly observed in the real-world networks. Many structural properties of the PA model, ranging from asymptotic degree distributions to local network structures, have been well-understood by now. We refer the interested readers to the excellent books~\cite{Hofstad_2016,Hofstad_2024} for details.

Changepoint detection in PA models initially appeared in~\cite{bhamidi2018change,banerjee2023fluctuation}, specifically for PA trees (with $m=1$), focusing on the \emph{early-change} regime~\cite{bhamidi2018change,banerjee2023fluctuation}, where $\tau_n$ scales as $\gamma n$ or $n^\gamma$ for some constant $0<\gamma<1$. Through embedding the PA tree model into a continuous time branching process, consistent estimation of the changepoint is shown to be achievable based on either the asymptotic degree distribution or the maximum degree.   Subsequent work~\cite{cirkovic2022likelihood} proposes a likelihood-ratio based procedure to detect and estimate the changepoint, and establishes its consistency and asymptotic distribution. The results are further extended to detect multiple changepoints. However, the methods and results in~\cite{cirkovic2022likelihood} crucially rely on the observation of the entire network history. In comparison, our work, following~\cite{bet2023detecting}, assumes only a snapshot of the final network is observed. Also, our results hold for general preferential attachment models beyond trees with initial degree $m \ge 1$.  

In the absence of any change-point, the problem of estimating the general attachment functions was studied previously in~\cite{Gao2017} under the PA tree models, and consistent estimation is established based on empirical degrees. In the special case of affine attachment functions, the consistency and asymptotic normality of the maximum likelihood estimator are established for trees \cite{gao2021statistical} and random initial degrees~\cite{gao2017asymptotic}.

More broadly, our work contributes to the rapidly evolving field of detection and estimation in preferential attachment models. 
As will be seen later, one main challenge in our analysis is to deal with the unobserved vertex arrival order. A key driving intuition underlying our proof, albeit not precise, is that 
for the last $N$ arriving vertices with $\Delta^2 \ll N \ll n$, the final network snapshot contains some but very little information on their arrival order; Consequently, the identities of the last $\Delta$ vertices are almost ``hidden'' among these last $N$ vertices, so that the change of network structure  is undetectable. 
In this sense, our work nicely connects to a very active research field of \emph{network archeology}, which seeks to infer the vertex arrival order from the final network snapshot~\cite{magner2018times,young2019phase,crane2021inference,briend2024estimating}. 
A special instance, particularly well-studied with numerous deep results, is that of root finding, when one is only interested in estimating the first arriving vertex~\cite{Haigh1970,shah2011rumors,shah2016finding,BubeckMosselRacz2015,CurienEtAl2015,BubeckDevroyeLugosi2017,BubeckEldanMosselRacz2017,DevroyeReddad2019,KhimLoh2016,LugosiPereira2019,AddarioBerry2021,crane2021inference,banerjee2022root,brandenberger2022root,banerjee2023degree,briend2023archaeology,contat2024eve}.  
It is worth noting that the challenge, where only the final network snapshot is observed while the vertex arrival order is hidden, also naturally appears in many other related estimation problems in PA models, such as community detection~\cite{ben2025inference}, correlation detection~\cite{racz2022correlated}, and graph matching~\cite{Korula2014}. 
We believe that the new technical tools—particularly our likelihood ratio bounds and the associated coupling techniques—may prove useful in addressing a range of problems within this domain.

Lastly, changepoint detection and localization have also received extensive attention in dynamic graph models beyond PA models, such as dynamic stochastic block and graphon models, see e.g.~\cite{wang2013locality,zhao2019change,Bhattacharjee2020,wang2021optimal,enikeeva2025change} and references therein. However, in contrast to the current paper, those works often assume networks do not grow in sizes, while edge connections are changing dynamically, and the goal is to detect the change of the underlying edge probability matrix based on the entire network evolution history. 

\subsection{Notation and organization}
We use $G_n = (V_n, E_n)$ to denote the network snapshot at time step $n$, and $\overline{G}_n$ to denote the sequence of networks generated up to time $n$. The changepoint time is given by $\tau_n = n - \Delta$. We use $P$ and $Q$, in different font styles, to distinguish certain distributions under the null hypothesis $\mathcal{H}_0$ and the alternative hypothesis $\mathcal{H}_1$, respectively.
For probability measures $P$ and $Q$, their total variation distance is given by $\operatorname{TV}(P,Q)=\frac{1}{2}\int |dP-dQ|$. We use standard big $O$ notations,
e.g., for any sequences $\{a_n\}$ and $\{b_n\}$, we say that $a_n = O(b_n)$ if there exists an absolute constant $C>0$ such that $|a_n| \leq C |b_n|$ for all $n$ large enough. We say that $a_n = o(b_n)$ if $\lim_{n\to \infty} a(n)/b(n) = 0$. We write $ a_n= \Omega(b_n)$ if $b_n = O(a_n)$, $a_n = \omega(b_n)$ if $b_n=o(a_n)$, and $a_n=\Theta(b_n)$ if $a_n=O(b_n)$ and $b_n=O(a_n).$

The remainder of the paper is structured as follows.  Section~\ref{sec:main_results} formally introduces the problem setup, states our main results, and includes a concise proof that the impossibility of changepoint detection implies the impossibility of localization.   Section~\ref{sec:overview_proofs} provides a high-level overview of the key proof ideas behind Theorem~\ref{thm-main}. In Section~\ref{sec-proof-of-main-thm}, we present the formal proof of Theorem~\ref{thm-main}, relying on a variance bound on  the likelihood ratio  stated in Proposition~\ref{prop-variance}. The most technically involved portion, Section~\ref{sec:bounding_norm}, is devoted entirely to proving Proposition~\ref{prop-variance}. Finally, we conclude with a discussion of open questions and future directions in Section~\ref{sec:discussion}.
\section{Problem setup and main results}\label{sec:main_results}
In this section, we formally introduce the problem setup and our main results. 

\begin{definition}[Preferential attachment model]\label{def-preferential-attachment-model}
	Given $m,n\in \mathbb{N}$, a sequence $\{\delta_t\}_{t=2}^n$ with $\delta_t>-m$ for each $t$, and a set $V_n$ of cardinality $n$, an undirected graph $G_n=(V_n,E_n)$ with the vertex set $V_n$ and edge set $E_n$ is defined as follows:
	\begin{itemize}
		\item The initial graph $G_2=(V_2, E_2)$ consists of two vertices $v_1\neq v_2$ chosen from $V_n$ uniformly at random and $m$ multiple edges connecting them; 
		\item For $3 \le t \le n$, graph $G_t=(V_t, E_t)$ is obtained by adding to $G_{t-1}$ a new vertex $v_t$ chosen from $V_n\setminus V_{t-1}$ uniformly at random and connecting $m$ edges from $v_t$ to vertices in $V_{t-1}$. 
		Specifically, we construct a graph sequence $G_{t,0},G_{t,1},\ldots,G_{t,m}$ starting from $G_{t,0}=G_{t-1}$ and ending at $G_{t,m}=G_t$. For $1\leq i \leq m$, the graph $G_{t,i}$ is obtained by adding an edge from $v_t$ to a vertex $v_{t,i} \in V_{t-1}$ with conditional probability 
		\begin{align}\label{eq-prob-def}
			\mathbb{P} \left[ v_{t,i}=v \mid G_{t,i-1} \right] = \frac{\mathsf{deg}_{G_{t,i-1}}(v)+\delta_t}{ \sum_{u \in V_{t-1} } \left( \mathsf{deg}_{G_{t,i-1}}(u) +\delta_t \right) }, \quad \forall v \in V_{t-1},
		\end{align}
		where $\mathsf{deg}_{G_{t,i-1}}(v)$ is the degree of vertex $v$ in $G_{t,i-1}$. 
	\end{itemize}
\end{definition}

We abbreviate the entire network history $\{G_{t,i}\}_{3\leq t\leq n,1\leq i\leq m}$ as $\overline{G}_n$ and denote its distribution by 
$\mathcal{P}_{m,n,\{\delta_t\}}$. 
The marginal distribution of the final network snapshot $G_n$ is denoted by $\mathbb P_{m,n,\{\delta_t\}}.$

\begin{definition}[Detection problem] 
	The changepoint detection problem with parameters $(n,m,\delta, \delta', \tau_n)$ refers to the following problem of distinguishing hypotheses:
	\begin{align*}
		& \mathcal{H}_0: \quad G \sim  \mathbb{P}_{m,n,\delta}  \triangleq \mathbb{P}_{m,n,\{\delta_t\}} \text{ with $\delta_t\equiv \delta $ for $2 \le t \le n$\,, } \\
		& \mathcal{H}_1: \quad G \sim  \mathbb{Q}_{m,n,\delta,\delta',\tau_n}\triangleq \mathbb P_{m,n,\{\delta_t\} } \text{ with $\delta_t\equiv \delta\mathbf{1}\{t\le \tau_n\}+\delta'\mathbf{1}\{t>\tau_n\}$\,.  }
	\end{align*}
\end{definition}

\begin{figure}[h]
\centering
\includegraphics[width=0.8\linewidth]{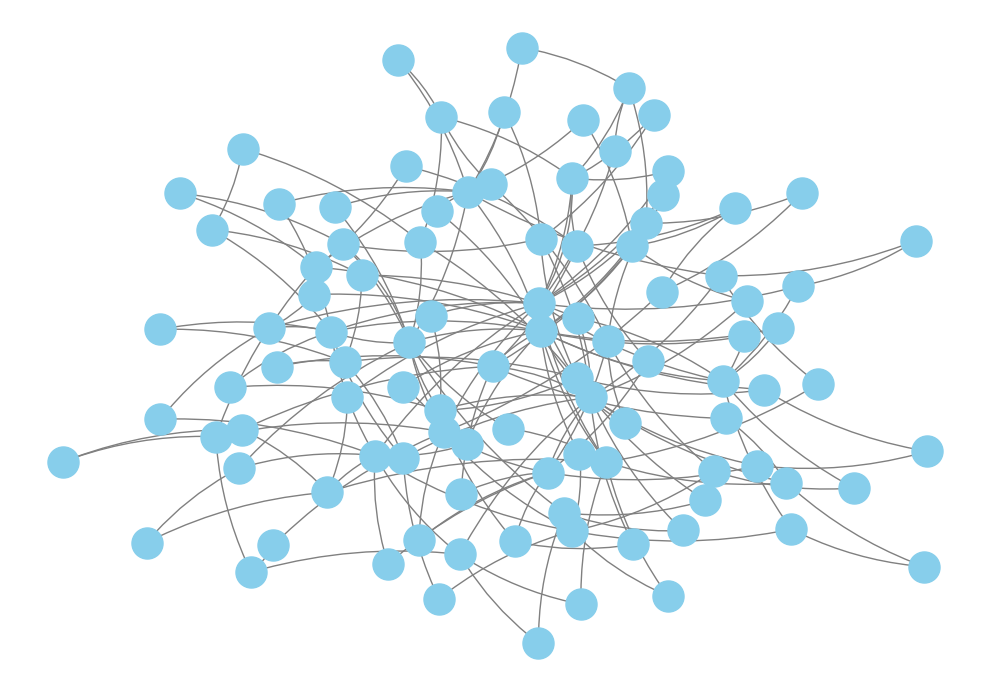}
\caption{A final network snapshot of a preferential attachment graph with $m=2$ and $n=100$. The goal is to detect whether there is a change in $\delta(t)$.
}
\label{fig:PA_detection}
\end{figure}

We focus on the regime where $m \in \mathbb{N}$ and $\delta \neq \delta'$ are fixed constants while the network size $n \to \infty$.  
For ease of notation, we abbreviate $\mathbb{P}_{m,n,\delta}  $ as $\mathbb{P}_n$
and 
$\mathbb{Q}_{m,n,\delta,\delta',\tau_n}$ as $\mathbb{Q}_{n,\tau_n}$, where the first subscript indicates the network size and the second subscript indicates the changepoint time. 
Then $\mathbb{P}_{n}$ is equivalent to $\mathbb{Q}_{n,n}$, as no changepoint is equivalent to the changepoint being at the last time step $n.$

In \cite{bet2023detecting}, the authors introduced a testing statistic based on the number of vertices with minimum degree $m$. They proved that this statistic achieves strong detection between $\mathbb{Q}_{n,n}$ and $\mathbb{Q}_{n,\tau_n}$, provided that $n-\tau_n=\omega(\sqrt n)$ and weak detection when $n-\tau_n=\Omega(\sqrt n)$. Our first result shows that these bounds are essentially optimal. 


%

\begin{theorem}\label{thm-main}
	If $\tau_n$ satisfies $n-\tau_n=o(\sqrt n)$, then $\operatorname{TV}(\mathbb{Q}_{n,n},\mathbb{Q}_{n,\tau_n}) =o(1)$, where $\operatorname{TV}$ stands for the total variation distance. 
\end{theorem}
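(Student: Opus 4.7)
The plan is to upper bound the target TV by giving the statistician access to strictly more information, making the problem analytically tractable. Fix $t^*=t^*(n)$ with $\Delta_n^2\ll n-t^*\ll n$, which is possible because $\Delta_n=o(\sqrt n)$ implies $\Delta_n^2=o(n)$. Since $t^*<\tau_n$ for all large $n$ and the two hypotheses have identical dynamics up to time $\tau_n$, the marginal law $\mu$ of the full network history $\overline G_{t^*}$ is the same under $\mathbb P$ and $\mathbb Q$. Data processing and the conditional decomposition of TV then yield
\[
\operatorname{TV}\!\bigl(\mathbb P_{m,n,\delta},\mathbb Q_{m,n,\delta,\delta',\tau_n}\bigr)\;\le\;\mathbb E_{\overline g\sim\mu}\!\Bigl[\operatorname{TV}\!\bigl(\mathbb P(G_n\mid\overline g),\,\mathbb Q(G_n\mid\overline g)\bigr)\Bigr],
\]
so it suffices to show the inner conditional TV is $o(1)$ on a $\mu$-typical event in $\overline g$.

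\textbf{Second-moment method and likelihood ratio.} Fix a typical $\overline g$ and apply $\operatorname{TV}\le\tfrac12\sqrt{\chi^2}$. The conditional likelihood ratio admits the mixture representation
\[
L(G_n)=\frac{\mathbb Q(G_n\mid\overline g)}{\mathbb P(G_n\mid\overline g)}=\mathbb E_{\mathbb P}\!\bigl[\Lambda(\overline H)\bigm|\overline g,G_n\bigr],
\]
where $\overline H$ is the hidden arrival-and-edge history from time $t^*+1$ to $n$ and $\Lambda(\overline H)$ is an explicit product, over the $m\Delta_n$ edge placements of the last $\Delta_n$ arrivals, of per-step likelihood-ratio factors comparing the $\delta$- and $\delta'$-PA rules; only the last $\Delta_n$ steps contribute because the dynamics agree up to time $\tau_n$. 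Drawing two independent samples $\overline H^{(1)},\overline H^{(2)}$ from the $\mathbb P$-posterior of $\overline H$ given $(\overline g,G_n)$, the $\chi^2$ becomes $\mathbb E_{\mathbb P}[\Lambda(\overline H^{(1)})\,\Lambda(\overline H^{(2)})]-1$, and it remains to show this cross-moment equals $1+o(1)$.

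\textbf{Heuristic and main obstacle.} Each $\overline H^{(i)}$ designates a specific $\Delta_n$-subset of positions, among the $n-t^*$ new-vertex slots, at which the $\delta'$-dynamics is active. Expanding the cross-moment factor by factor, surviving contributions come from positions shared by the two orderings; two independent $\Delta_n$-subsets of $[n-t^*]$ have expected overlap $\Delta_n^2/(n-t^*)$, and each overlapping position contributes a bounded amount. This heuristic yields $\chi^2=O(\Delta_n^2/(n-t^*))=o(1)$ by our choice of $t^*$, and it is the graph analogue of the simple Bernoulli benchmark (observing only the sum of $n-t^*$ coins with $\Delta_n$ secretly shifted in bias), which has the TV threshold $\Delta_n\asymp\sqrt n$. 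Making this heuristic rigorous is the chief technical obstacle: the $\mathbb P$-posterior over arrival orderings given $G_n$ is not uniform but degree-biased, the per-edge factors in $\Lambda$ depend on the evolving sub-graph along each ordering, and $\overline H^{(1)},\overline H^{(2)}$ are strongly coupled through their shared endpoint $G_n$. Proving the cross-moment bound therefore requires controlling the joint posterior on $(\overline H^{(1)},\overline H^{(2)})$ via concentration of the degree sequence on the $\mu$-typical event, an approximation of the posterior by a tractable near-product form, and careful combinatorial bookkeeping of the surviving cross-terms.
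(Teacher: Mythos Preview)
Your opening steps coincide with the paper's: both condition on the full history $\overline G_M$ up to a time $M=t^*$ with $\Delta_n^2\ll N:=n-M\ll n$ and then bound the conditional $\chi^2$. The gap is that you stop at the heuristic, and the cancellation mechanism you invoke does not transfer from the Bernoulli toy model. In the coin model the non-overlap factors average exactly to $1$ because the coins are i.i.d.\ and the random $\Delta_n$-subset is independent of them; here each per-edge factor $(d+\delta')/(d+\delta)$ depends on a degree evaluated along the random ordering, and the two ``last-$\Delta_n$'' sets are not independent uniform subsets of $[N]$ but are determined by degree-biased orderings coupled through the common endpoint $G_n$. The three tasks you list at the end (controlling the joint posterior, approximating it by a near-product form, and the combinatorial bookkeeping) are each substantial and none is carried out, so as written this is a plan rather than a proof.

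The paper avoids the correlated-histories cross moment altogether. \emph{Before} conditioning, it applies the triangle inequality
\[
\operatorname{TV}(\mathbb P_n,\mathbb Q_{n,\tau_n})\le\sum_{k=1}^{\Delta_n}\operatorname{TV}(\mathbb Q_{n,n-k+1},\mathbb Q_{n,n-k})
\]
and, via data processing, reduces to the single-step problem $\operatorname{TV}(\mathbb P_n,\mathbb Q_{n,n-1})=O(N^{-1/2})$. With only one perturbed step the conditional likelihood ratio admits the closed form $C_1 S$ with $S=N^{-1}\sum_{v}|\mathcal C(v)|\,\lambda_v X_v$, the sum running over vertices arriving after $M$ and $\mathcal C(v)$ the component of $v$ in the subgraph induced by the late arrivals; crucially, no latent average over hidden orderings remains. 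The paper then bounds $\operatorname{Var}(S)$ by the Efron--Stein inequality, after encoding the post-$M$ process through $mN$ independent variables in a structured way that localizes the effect of resampling any one coordinate to a single component $\mathcal C(v_t)$; the bound $\operatorname{Var}(S)=O(1/N)$ follows once $\mathbb E[|\mathcal C(v)|^2]=O(1)$, proved by stochastic domination by a subcritical branching process (using $N\ll n$). This triangle-inequality reduction to a one-step change is the key idea your proposal is missing: it is precisely what eliminates the hidden-ordering average and makes the $\chi^2$ computation tractable.
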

Since the minimum sum of Type-I and II errors is equal to one minus the TV distance,  it follows from Theorem~\ref{thm-main} that all tests are asymptotically powerless, that is,  their sum of the Type-I and Type-II errors converges to $1$ as $n \to \infty.$ This resolves~\cite[Conjecture 3.2]{bet2023detecting} in positive.

Regarding the task of estimating changepoint, \cite[Theorem 2.4]{bhamidi2018change} shows that assuming $\tau_n \ge \varepsilon n$ for some constant $\varepsilon \in (0,1)$, there exists an estimator $\widetilde\tau_n$ based on $G_n$ such that $|\widetilde{\tau}_n-\tau_n|= O(\sqrt n)$ with high probability.\footnote{While the estimator defined in~\cite[Equation (2.18)]{bhamidi2018change} requires the observation of the graph sequence $\{G_t\}$ for $\epsilon n \le t \le n$, it suffices to estimate $\tau_n$ based on the fraction of degree-$m$ vertices in the final network $G_n$. In particular, by \cite[Theorem~2.3]{bhamidi2018change}, the fraction of degree-$m$ vertices in $G_n$ approaches the asymptotic limit $p_1^\infty$ within $O(n^{-1/2})$ error. Therefore, by inverting the expression of $p_1^\infty$ given in \cite[Equation (2.3)]{bhamidi2018change} which depends on $\tau_n/n$, one can pinpoint $\tau_n/n$ within $O(n^{-1/2})$ error.}
As an immediate consequence of Theorem~\ref{thm-recovery}, we prove that this result is order-optimal. 

\begin{theorem}\label{thm-recovery}
	Assume that there exists $\varepsilon\in (0,1)$ such that $\tau_n\ge \varepsilon n$. Then there is no estimator $\widehat{\tau}_n$ based on $G_n\sim \mathbb{Q}_{n,\tau_n}$ such that $|\widehat{\tau}_n-\tau_n|=o(\sqrt n)$ holds with non-vanishing probability uniformly for all $ \tau_n\in [\varepsilon n,n] $. 
\end{theorem}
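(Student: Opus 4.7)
My plan is to derive Theorem~\ref{thm-recovery} as a reduction to Theorem~\ref{thm-main}, combining a data-processing inequality with a multi-point (rather than two-point) Bayesian argument, so as to rule out non-vanishing success probability for every constant $c>0$ and not just $c>1/2$.

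First I would establish the key two-alternative TV bound: for any $\tau_n,\tau_n'\in[\varepsilon n,n]$ with $\tau_n<\tau_n'$ and $\tau_n'-\tau_n=o(\sqrt n)$,
$$\operatorname{TV}(\mathbb Q_{m,n,\delta,\delta',\tau_n},\mathbb Q_{m,n,\delta,\delta',\tau_n'})=o(1).$$
The preferential attachment process is Markov in $G_t$, and under both distributions the evolution from $G_{\tau_n'}$ onward uses the same parameter $\delta'$; hence by data processing it suffices to compare the marginals of $G_{\tau_n'}$. Under $\mathbb Q_{m,n,\delta,\delta',\tau_n}$ this marginal is $\mathbb Q_{m,\tau_n',\delta,\delta',\tau_n}$, and under $\mathbb Q_{m,n,\delta,\delta',\tau_n'}$ it is $\mathbb P_{m,\tau_n',\delta}$. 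Since $\tau_n'\geq\varepsilon n$ gives $\sqrt{\tau_n'}=\Theta(\sqrt n)$, the gap satisfies $\tau_n'-\tau_n=o(\sqrt{\tau_n'})$, so Theorem~\ref{thm-main} applied with network size $\tau_n'$ and changepoint $\tau_n$ delivers the bound.

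Next I would argue by contradiction. Suppose there exist an estimator $\widehat\tau_n$, a sequence $a_n\to 0$, and a constant $c>0$ such that $\mathbb P_{\mathbb Q_{m,n,\delta,\delta',\tau_n}}(|\widehat\tau_n-\tau_n|\leq a_n\sqrt n)\geq c$ for every $\tau_n\in[\varepsilon n,n]$. Fix an integer $K\geq 3/c$, set $s_n=\lceil 2a_n\sqrt n\rceil+1$, and let $\tau_n^{(k)}=\lceil\varepsilon n\rceil+(k-1)s_n$ for $k=1,\dots,K$. Since $Ks_n=o(\sqrt n)$, for $n$ large all $\tau_n^{(k)}\in[\varepsilon n,n]$ and the confidence windows $[\tau_n^{(k)}-a_n\sqrt n,\tau_n^{(k)}+a_n\sqrt n]$ are disjoint. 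Writing $\mathbb Q^{(k)}=\mathbb Q_{m,n,\delta,\delta',\tau_n^{(k)}}$ and defining the classifier $\widehat k=\arg\min_k|\widehat\tau_n-\tau_n^{(k)}|$, the assumed accuracy together with the disjointness gives $\mathbb P_{\mathbb Q^{(k)}}(\widehat k=k)\geq c$ for each $k$.

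Finally I would close via a mixture-TV bound. Let $\overline{\mathbb Q}=\frac{1}{K}\sum_k\mathbb Q^{(k)}$. The triangle inequality together with Step~1 gives $\operatorname{TV}(\mathbb Q^{(k)},\overline{\mathbb Q})\leq\frac{1}{K}\sum_j\operatorname{TV}(\mathbb Q^{(k)},\mathbb Q^{(j)})=o(1)$, since $|\tau_n^{(j)}-\tau_n^{(k)}|\leq(K-1)s_n=o(\sqrt n)$ for all $j,k$. The elementary bound $\mathbb P_{\overline{\mathbb Q}}(\widehat k=k)\geq\mathbb P_{\mathbb Q^{(k)}}(\widehat k=k)-\operatorname{TV}(\mathbb Q^{(k)},\overline{\mathbb Q})$ then yields $\mathbb P_{\overline{\mathbb Q}}(\widehat k=k)\geq c-o(1)$ for every $k$; summing and using that $\{\widehat k=k\}_{k=1}^K$ partition the sample space gives $1\geq K(c-o(1))$, i.e., $c\leq 1/K+o(1)\leq c/3+o(1)$, a contradiction for large $n$. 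The core analytic content is Theorem~\ref{thm-main}; the only subtlety I anticipate is checking its applicability in Step~1 with the shifted parameters $(\tau_n',\tau_n)$, which is handled by $\tau_n'\geq\varepsilon n$, and the multi-point design in Steps~2--3 is what rules out arbitrarily small $c$ that a naive two-point Le Cam argument would miss.
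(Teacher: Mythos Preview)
Your proposal is correct and follows essentially the same approach as the paper: both first reduce the two-alternative TV bound $\operatorname{TV}(\mathbb Q_{m,n,\delta,\delta',\tau_n},\mathbb Q_{m,n,\delta,\delta',\tau_n'})=o(1)$ to Theorem~\ref{thm-main} via the data-processing inequality, then run a multi-point contradiction argument with disjoint confidence windows whose probabilities would sum to more than $1$. The only cosmetic differences are that the paper compares all alternatives to a single reference measure $\mathbb Q_{m,n,\delta,\delta',\tau_n^0}$ rather than a mixture, and uses a growing number $L=\lfloor\gamma_n^{-1/2}\rfloor$ of points rather than a fixed $K\ge 3/c$; neither changes the substance of the argument.
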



\begin{proof}[Proof of Theorem~\ref{thm-recovery} assuming Theorem~\ref{thm-main}]
	We first show that 
	$$
    \operatorname{TV}(\mathbb{Q}_{n,\tau_n},\mathbb{Q}_{n,\sigma_n})=o(1)
	$$
	for all $\sigma_n$ such that $|\sigma_n-\tau_n|=o(\sqrt n).$
	Without loss of generality, assume $\sigma_n\le \tau_n$. Consider the transformation that grows from $G_{\tau_n}$ to $G_n$ according to the preferential attachment model with $\delta(t)=\delta'$ for $\tau_n \le t \le n.$ If the input graph $G_{\tau_n} \sim \mathbb{Q}_{\tau_n,\tau_n} $
    then
	the output graph $G_n \sim  \mathbb{Q}_{n,\tau_n}$. 
    Analogously, if the input graph 
    $G_{\tau_n} \sim \mathbb{Q}_{\tau_n,\sigma_n}$, then
	the output graph  $G_n \sim \mathbb{Q}_{n,\sigma_n}.$
	By the data-processing inequality (see e.g.~\cite[Theorem 7.4]{polyanskiy2025information}), the TV distance does not increase after the transformation. Therefore, 
	\begin{align}
\operatorname{TV}\left(\mathbb{Q}_{n,\tau_n},\mathbb{Q}_{n,\sigma_n}\right)
		\le \operatorname{TV}\left( \mathbb{Q}_{\tau_n,\tau_n},\mathbb{Q}_{\tau_n,\sigma_n}\right) \le o(1), \label{eq:dp_1}
	\end{align}
	where the last inequality follows from applying Theorem~\ref{thm-main} with $n$ substituted by $\tau_n \ge \varepsilon n$, 
    $\tau_n$ substituted by $\sigma_n$, and using the assumption $\sigma_n \ge \tau_n-o(\sqrt{n}).$
	
	
	
	Next, we prove the statement of Theorem~\ref{thm-recovery} via contradiction. Assume that there exists a constant $c>0$ as well as a sequence $\gamma_n\to 0$ such that for any $n$, there exists $\widehat{\tau}_n\equiv\widehat{\tau}_n(G_n)$ satisfying
	\[
	\mathbb{Q}_{n,\tau_n}[|\widehat{\tau}_n-\tau_n|\le \gamma_n \sqrt n]\ge c
	\]
	uniformly for all $\tau_n \in [\varepsilon n,n]$. Pick 
    $$
    \tau_n^k=\varepsilon n+3k\gamma_n\sqrt n, \quad \text{ for } k=0, 1,\dots,L\triangleq \lfloor\gamma_n^{-1/2}\rfloor.
    $$
    Since $\max_{k,\ell}|\tau_n^k-\tau_n^\ell|\le 3\sqrt{\gamma_n}\cdot \sqrt{n}=o(\sqrt n)$, it follows from~\eqref{eq:dp_1} that   
    $$
    \operatorname{TV}(\mathbb{Q}_{n,\tau_n^k},\mathbb{Q}_{n,\tau_n^\ell})=o(1), \quad \forall 
    0\le k,\ell\le L.
    $$ 
    This implies 
	\[
\mathbb{Q}_{n,\tau_n^{0}}\big[|\hat{\tau}_n-\tau_n^k|\le \gamma_n\sqrt n\big]
\ge \mathbb{Q}_{n,\tau_n^k}\big[|\hat{\tau}_n-\tau_n^k|\le \gamma_n\sqrt n\big]-o(1)\ge c-o(1)\,.
	\]
	However, it is clear that the events $\mathcal{E}_k\triangleq \{|\hat{\tau}_n -\tau_n^k|\le \gamma_n\sqrt n\}$ are mutually disjoint for  $k=0,1,\dots,L$. Therefore, we conclude that 
    $$
   1  \ge \mathbb{Q}_{n,\tau_n^{0}}\big[
   \cup_{k=0}^L \mathcal{E}_k \big] 
   = \sum_{k=0}^L \mathbb{Q}_{n,\tau_n^{0}}\big[ \mathcal{E}_k \big]
     \ge (L+1)(c-o(1))
     \ge \gamma_n^{-1/2}(c-o(1)),
    $$
    which contradicts to the assumption that $\gamma_n\to 0$. This completes the proof.    
\end{proof}


\section{Overview of proof ideas for Theorem~\ref{thm-main}} \label{sec:overview_proofs}
Before presenting the formal proof, we briefly outline the main challenges, the limitations of the proof techniques from~\cite{kaddouri2024impossibility}, and our new ideas. For ease of exposition, we focus on the tree case $m=1$ and refer to the pre-existing vertex that an arriving vertex $v$ attaches to as the \emph{parent} of $v$. The main ideas can be readily extended to general $m \ge 1.$

A celebrated approach to prove the impossibility of detection in high-dimensional statistics and network inference is to bound the second-moment of the likelihood ratio between the alternative and null distributions, see e.g.~\cite{wu2018statistical} for a survey of this technique. However, the likelihood ratio between the alterative distribution $\mathbb{Q}_{n,\tau_n}$ and null distribution 
$\mathbb{P}_{n}$ involves an average over all admissible vertex arrival times, rendering its second moment too complex to be bounded directly. One natural idea is to simplify the likelihood ratio by revealing all vertex arrival times, i.e., the entire network history. However, as shown in~\cite{kaddouri2024impossibility}, this overly simplifies the detection problem, with the (strong) detection threshold becoming  $\Delta \to \infty$. To address this, \cite{kaddouri2024impossibility} reveals the arrival times of all vertices except for a carefully chosen subset of leaf vertices, denoted by $\mathcal S$ (highlighted in red and bold in Fig.~\ref{fig:PA_easy_vis}). Despite this refinement, the approach still reveals too much information, resulting in the proof that detection is impossible only for $\Delta=o(n^{1/3}).$  In more detail, 
given a time threshold $\tau_n'=n-\Delta'$, $\mathcal S$ includes all leaf vertices $v$ such that (1) the parent of $v$ arrives no later than $\tau_n'$; (2) and $v$ is the only child of its parent that arrives later than $\tau_n'.$ 
It can be shown that $|\mathcal S| \approx \Delta'$, and furthermore, $\mathcal S$
contains all vertices arriving after $\tau_n$ with probability approximately $(1-\Delta'/n)^\Delta$, which is $1-o(1)$ when $\Delta' \Delta=o(n).$ Since the arrival times of all vertices in $\mathcal S$ are  interchangeable, detection is expected to be impossible if $|\mathcal S| \asymp \Delta' \gg \Delta^2$. This holds when $\Delta \ll n^{1/3}$ by choosing $\Delta^2 \ll \Delta' \ll n/\Delta$.

\begin{figure}[h]
\centering
\includegraphics[width=0.8\linewidth]{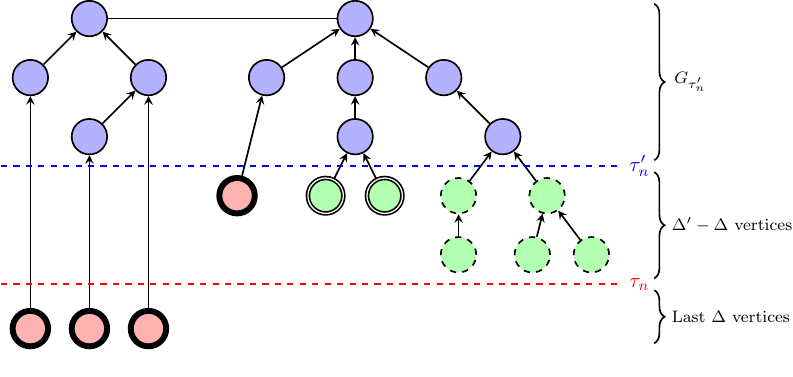}
\caption{Figure credit~\cite{kaddouri2024impossibility}: Typical Preferential attachment graph with $m=1$ and $\Delta=o(n^{1/3}).$ The arrows are pointing from a vertex to its parent. The arrival times of all vertices, except for the set $\mathcal{S}$ of leaf vertices (highlighted in bolded red), are revealed. 
}
\label{fig:PA_easy_vis}
\end{figure}

\begin{figure}[h]
\centering
\includegraphics[width=0.8\linewidth]{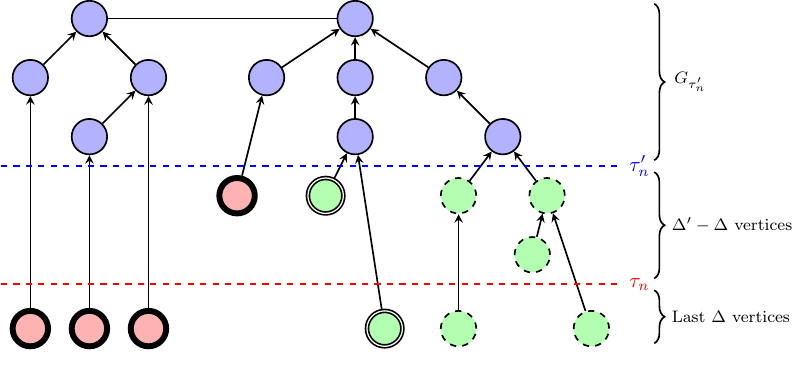}
\caption{Figure credit~\cite{kaddouri2024impossibility}: Typical Preferential attachment graph with $m=1$ and $\Delta=\Omega(n^{1/3}).$ 
The arrows are pointing from a vertex to its parent.
Vertices arriving after $\tau_n$ may attach to vertices arrived earlier in $[\tau_n'+1,\tau]$, as shown by dashed vertices at bottom.}
\label{fig:PA_hard_vis}
\end{figure}

To extend the impossibility result to $\Delta=o(n^{1/2})$, one can only reveal the vertices  arriving up to time $n-\Delta'$, where $\Delta^2 \ll  \Delta' \ll n$. However, when $\Delta=\Omega(n^{1/3})$ so that $\Delta'\gg n^{2/3}$, some vertices arriving after $\tau_n$ may attach to vertices arrived in $[\tau_n'+1, \tau_n]$, as illustrated in~Fig.~\ref{fig:PA_hard_vis}. As a consequence, the second moment of the likelihood ratio is still too complex to be directly bounded. Thus, some fresh new ideas are needed. Our proof proceeds in four steps.

\paragraph*{Step 1: Interpolation} Our starting point is as follows. As illustrated in Fig.~\ref{fig:PA_hard_vis}, vertices arriving after $\tau_n'=n-\Delta'$ form a subgraph consisting of vertex-disjoint connected components. Although vertices arriving after $\tau_n$ may attach to vertices arrived in $[\tau_n'+1, \tau_n]$, each of them typically attaches to a distinct component or forms a component by itself. Consequently, these vertices behave approximately ``independently'' of each other. This suggests that we may tightly bound the total variation distance $\operatorname{TV}\left(\mathbb{P}_n,\mathbb{Q}_{n,\tau_n} \right)$ by applying the triangle inequality across the changepoint time from $n-\Delta$ to $n-1$. More formally, recall that $\mathbb{Q}_{n,n-k}$ denotes the distribution of the final network snapshot $G_n$, where the first index indicates the size of the network and the second index indicates the time of the changepoint. Thus, we arrive at a sequence of distributions interpolating between 
the null distribution $\mathbb{P}_n=\mathbb{Q}_{n,n}$ and the alterative distribution $\mathbb{Q}_{n,\tau_n}=\mathbb{Q}_{n,n-\Delta}$:
 $$  \mathbb{P}_n=\mathbb{Q}_{n,n} \to \mathbb{Q}_{n,n-1} \to \mathbb{Q}_{n,n-2} \to \cdots \to \mathbb{Q}_{n,n-\Delta-1} \to \mathbb{Q}_{n,n-\Delta} = \mathbb{Q}_{n,\tau_n}.$$
Applying the triangle inequality across the interpolation path, we get that 
\begin{align}
\operatorname{TV}\left(\mathbb{Q}_{n,n},\mathbb{Q}_{n,\tau_n} \right)
& \le\sum_{k=1}^\Delta\operatorname{TV}\left(\mathbb{Q}_{n,n-k+1}, \mathbb{Q}_{n,n-k}\right) \\\nonumber 
&\le \sum_{k=1}^\Delta\operatorname{TV}\left(\mathbb{Q}_{n-k+1,n-k+1}, \mathbb{Q}_{n-k+1,n-k}\right) \nonumber  \\
& = \sum_{k=1}^\Delta\operatorname{TV}\left(\mathbb{P}_{n-k+1}, \mathbb{Q}_{n-k+1,n-k}\right), \nonumber 
\end{align}
where the second inequality holds by applying the data-processing inequality (see e.g.~\cite[Theorem 7.4]{polyanskiy2025information}), since observing the network snapshot at a later time cannot increase the TV distance. 
Thus, we have successfully reducing the task to showing $\operatorname{TV}\left(\mathbb{P}_{n'},\mathbb{Q}_{n',n'-1} \right)=o(1/\Delta)$ for all $n' \in [n-\Delta+1, n].$ Since $n'=n-o(n)$, without loss of generality,  henceforth we can assume $n'=n$ and $\tau_n=n-1$, i.e., the changepoint happens one step before the final time. 

\paragraph*{Step 2: Consider an ``easier model''} To further simplify the likelihood ratio between $\mathbb{Q}_{n,n-1}$ and $\mathbb{P}_{n}$, we  reveal the network history up to time $M=n-N$, denoted by $\overline{G}_M.$ where $\Delta^2 \ll N \ll n.$ Let $\mathcal{P}$ and $\mathcal{Q}$ denote the joint law of $\overline{G}_M$ and $G_n$, under $\mathcal{H}_0$ and $\mathcal{H}_1$, respectively. By the data-processing inequality and Jensen's inequality, we can show 
   \begin{align*}
    \mathrm{TV}(\mathbb{P}_{n},\mathbb{Q}_{n,n-1})  \le  \mathbb{E}_{\overline{G}_M \sim \mathcal{P}_{\overline{G}_M}}  \left[  \mathrm{TV}\left(\mathcal{P}_{G_n \mid \overline{G}_M},\mathcal{Q}_{G_n \mid \overline{G}_M} \right) \right],
    \end{align*}
    where $\mathcal{P}_{G_n \mid \overline{G}_M}$ and $\mathcal{Q}_{G_n \mid \overline{G}_M}$ denote the distribution of $G_n$ conditional on $\overline{G}_M$,  under $\mathcal{H}_0$ and $\mathcal{H}_1$, respectively.
Thus, we further reduce the task to proving 
$\mathrm{TV}\left(\mathcal{P}_{G_n \mid \overline{G}_M},\mathcal{Q}_{G_n \mid \overline{G}_M} \right)=o(1/\Delta)$. 

\paragraph*{Step 3: Bound the second moment} 
Define  the likelihood ratio 
    $
    \mathcal{L} \triangleq \frac{\mathcal{Q}_{G_n \mid \overline{G}_M}}{\mathcal{P}_{G_n \mid \overline{G}_M}}. 
    $
    Then 
    \begin{align*}
    2\mathrm{TV}\left(\mathcal{P}_{G_n \mid \overline{G}_M},\mathcal{Q}_{G_n \mid \overline{G}_M} \right) & = \mathbb{E}_{\mathcal{P}_{G_n \mid \overline{G}_M} } \left[ |\mathcal{L}-1 | \right] \le \sqrt{ \mathrm{Var}_{\mathcal{P}_{G_n \mid \overline{G}_M}}\left[ \mathcal{L} \right]}.
    \end{align*}  
Thus, it suffices  to show 
 $
 \mathrm{Var}_{\mathcal{P}_{G_n \mid \overline{G}_M}}\left[ \mathcal{L} \right]=O(1/N),
 $
 as $N \gg \Delta^2$. Let $V$ denote the set of vertices arriving after time $M=n-N$. 
Consider the subgraph of $G_n$ induced by $V$, which can be decomposed as a vertex-disjoint union of connected components, as illustrated in Fig.~\ref{fig:PA_hard_component}.

\begin{figure}[h]
\centering
\includegraphics[width=0.8\linewidth]{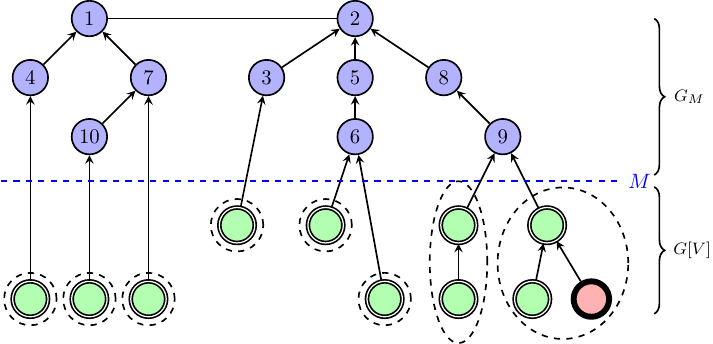}
\caption{Typical Preferential attachment graph with $m=1$, $\Delta=o(\sqrt{n})$, and $\tau_n=n-1.$ The arrows are pointing from a vertex to its parent. The arrival times of vertices arriving up to $M$ have been revealed. The connected components are denoted by dashed ellipses. The bolded red vertex denotes the last-arriving vertex. 
}
\label{fig:PA_hard_component}
\end{figure}

Crucially, the connected components can arrive in any relative order, and only one vertex arrives after the changepoint $\tau_n=n-1$. Thus, 
we can derive a closed-form expression for the likelihood ratio as follows:
$$
\mathcal{L} \triangleq \frac{\mathcal{Q}_{G_n \mid \overline{G}_M}}{\mathcal{P}_{G_n \mid \overline{G}_M}} =  \frac{C_1}{N} \sum_{v \in V } \left| \mathcal{C}(v) \right| \lambda_v X_v,
$$
where $C_1$ is some bounded constant, $\mathcal{C}(v)$ denotes the connected component containing $v \in V$,  $\lambda_v$ denotes the probability that the last-arriving vertex is $v$ conditional on it belongs to $\mathcal{C}(v)$, and $X_v$ denotes the ratio of the attachment probability if there is a change from $\delta'$ to $\delta$ versus no change (cf.~Proposition~\ref{prop-likelihood-ratio-one-step}). Note that both $\lambda_v$ and $X_v$ are bounded above and below by absolute constants. Thus, the variance of $\mathcal{L}$ would be largely driven by the fluctuation of component sizes $|\mathcal{C}(v)|$. Although the sizes of these components vary, we expect that they are typically $\Theta(1)$ and approximately independent, which suggests that $\mathrm{Var}_{\mathcal{P}_{G_n \mid \overline{G}_M}}\left[\mathcal{L} \right]=O(1/N)$. However, rigorously proving this remains the last challenge.

\paragraph*{Step 4. Efron-Stein inequality and coupling}
To bound $\mathrm{Var}_{\mathcal{P}_{G_n \mid \overline{G}_M}}\left[ \mathcal{L} \right]$, we first encode the conditional law $\mathcal{P}_{G_n \mid \overline{G}_M}$ using $N m$ independent random variables $\{U_{t,i}\}_{M<t\le n, 1 \le i \le m}$. This can be easily done in the special case where $m=1$ and $\delta=0,$ where each new arriving vertex connects to an existing vertex $v$ with probability proportional to its degree. Equivalently, $v$ can be chosen by first sampling from all existing edges and then picking one of its two endpoints, uniformly at random. Consequently,
 $ \mathcal{P}_{G_n \mid \overline{G}_M}$  can be encoded by $N$ independent uniform random variables supported over $[2(M-1)], [2M], \ldots, [2(n-2)]$, respectively. This encoding scheme, with a bit of delicate adjustment, can be extended to general $m \ge 1$ and $\delta>-m.$ Now, letting 
     $
     U=(U_{M+1,1}, \ldots, {U_{t,i}}, \ldots, U_{n,m})$ and $U^{(t,i)} = (U_{M+1,1}, \ldots, U'_{t,i}, \ldots, U_{n,m})$, where  $U'_{t,i}$ is an independent copy of $U_{t,i}$,
     we can then write $\mathcal{L}$ as $f(U)$ and apply the Efron-Stein inequality to get that 
     $$
     \mathrm{Var}[\mathcal{L}] \le \frac{1}{2} \sum_{ M<t \le n} \sum_{1 \le i \le m} \mathbb{E}\left[ \left( f(U)- f( U^{(t,i)} ) \right)^2 \right].
     $$
Crucially, our encoding scheme ensures that resampling $U_{t,i}$  can only affect $C(t)$ (the component containing vertex arrived at time $t$) and $C'(t)$ (the counterpart when $U_{t,i}$ is replaced by $U'_{t,i}$), so that  $$
    \left| f(U)- f( U^{(t,i)} ) \right| \le O\left(  \frac{ |\mathcal{C}(t)| + |\mathcal{C}'(t)| }{N} \right). 
     $$
 Finally,  we show the growth of $\mathcal{C}(t)$ can be dominated by a sub-critical branching process to conclude $\mathbb{E}[ |\mathcal{C}(t)|^2] =O(1)$, thereby completing the proof.   

\section{Proof of Theorem~\ref{thm-main}}\label{sec-proof-of-main-thm}



In the following, we present the proof of Theorem~\ref{thm-main}. 
Noticing that $\mathbb{P}_n=\mathbb{Q}_{n,n}$, our proof starts with a simple triangle inequality,
\[
\operatorname{TV}\left(\mathbb{P}_n,\mathbb{Q}_{n,\tau_n} \right)\le\sum_{k=1}^\Delta\operatorname{TV}\left(\mathbb{Q}_{n,n-k+1}, \mathbb{Q}_{n,n-k}\right)\,.
\]
Furthermore, by the data processing inequality (as argued in the Proof of Theorem~\ref{thm-recovery} in Section~\ref{sec-intro}), we have for any $1\le k\le \Delta$, 
$$\operatorname{TV}(\mathbb{Q}_{n,n-k+1}, \mathbb{Q}_{n,n-k})\le 
\operatorname{TV}(\mathbb{Q}_{n-k+1,n-k+1}, \mathbb{Q}_{n-k+1,n-k})
=
\operatorname{TV}(\mathbb{P}_{n-k+1},\mathbb{Q}_{n-k+1,n-k})\,.$$
Therefore, it suffices to show that 
\begin{equation}\label{eq-goal}
	\operatorname{TV}(\mathbb{P}_{n-k+1},\mathbb{Q}_{n-k+1,n-k})=o\left(\frac{1}{\Delta}\right)\,,\forall 1\le k\le \Delta\,.
\end{equation}

We focus on the proof of \eqref{eq-goal} for the case $k=1$, as the proof of the other cases can be obtained by substituting $n$ with $n'\triangleq n-k+1$. 
In what follows we further abbreviate 
$\mathbb{P}$ and $\mathbb{Q}$ for 
$\mathbb{P}_n$ and $\mathbb{Q}_{n,n-1}$,
respectively. 
Our basic strategy is to use the $\chi^2$-norm to control the TV distance. However, a direct computation of the likelihood ratio between $\mathbb{Q}$ and $\mathbb{P}$ leads to a fairly complicated expression, making it difficult to obtain a nice quantitative upper bound on the $\chi^2$-norm as claimed in \eqref{eq-goal}. We circumvent this difficulty by introducing a series of simplified models. We show that on the one hand, the original models are mixtures of the simplified models so it suffices to control the TV distance between simplified models, and on the other hand, the likelihood ratio between the simplified models is much more tractable, enabling us to obtain the desired $\chi^2$-norm upper bound. 

We now proceed to introduce the simplified models. 
We introduce a parameter $N\equiv N(n)$ such that $\Delta^2\ll N\ll n$, and we write $M=n-N$. 
Let $\overline{G}_M$ denote the network history $\{G_{t,i}\}_{3\leq t\leq M,1\leq i\leq m}$ up to time $M.$ 
Let $\mathcal P_{\overline{G}_M, G_n}$ (resp. $\mathcal Q_{\overline{G}_M, G_n}$)
denote the joint distribution of network snapshots $\overline{G}_M$ and $G_n$ under $\mathcal{H}_0$ (resp. $\mathcal{H}_1$). 
Then the marginals satisfy $\mathcal P_{G_n}=\mathbb P$, $\mathcal Q_{G_n}=\mathbb Q$, and 
$\mathcal P_{\overline{G}_M}=\mathcal Q_{\overline{G}_M}$. 
It follows that 
\begin{align}
	\operatorname{TV}(\mathbb P,\mathbb Q) \le \operatorname{TV}\left(\mathcal P_{\overline{G}_M, G_n}\;, \mathcal Q_{\overline{G}_M, G_n} \right)
	\le \mathbb{E}_{ \overline{G}_M \sim \mathcal P_{\overline{G}_M} } \left[  \operatorname{TV}\left(\mathcal  P_{G_n \mid \overline{G}_M} \; , \mathcal Q_{ G_n \mid \overline{G}_M }\right) \right], \label{eq:TV_bounds}
\end{align}
where the first inequality follows from the data-processing inequality and the second one holds due to Jensen's inequality and the convexity of TV distance.

For notational simplicity, we abbreviate $\mathcal P_{G_n \mid \overline{G}_M=\overline{\operatorname{G}}_M}$ and $\mathcal Q_{G_n \mid \overline{G}_M=\overline{\operatorname{G}}_M}$ as $\mathtt P_{\overline{\operatorname{G}}_M}$
and $\mathtt Q_{\overline{\operatorname{G}}_M}$, respectively. Our main technical input is the next proposition.
\begin{proposition}\label{prop-variance}
	For any realization $\overline{\operatorname{G}}_M$ of $\overline{G}_M$, uniformly we have
	\[
	\mathbb{E}_{G\sim \mathtt P_{\overline{\operatorname{G}}_M}}\left(\frac{ \mathtt Q_{\overline{\operatorname{G}}_M}[G]}{\mathtt P_{\overline{\operatorname{G}}_M} [G] }-1\right)^2=O\left(\frac{1}{N}\right)\,.
	\]
\end{proposition}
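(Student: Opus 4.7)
The plan is to bound the $\chi^2$-divergence
$$\mathbb E_{G\sim \mathtt P_{\overline{\operatorname{G}}_M}}\bigl[(L(G)-1)^2\bigr], \qquad L(G) := \frac{\mathtt Q_{\overline{\operatorname{G}}_M}[G]}{\mathtt P_{\overline{\operatorname{G}}_M}[G]},$$
by combining an Efron-Stein-type variance inequality with a branching-process control on component sizes. Because $\mathtt P$ and $\mathtt Q$ agree on the dynamics from time $M+1$ to $n-1$ and differ only at the last step $t=n$ (parameter $\delta$ versus $\delta'$), conditioning on the full history $\overline G_{n-1}$ produces a deterministic one-step likelihood ratio
$$\Phi \;=\; \prod_{i=1}^m \frac{(\mathsf{deg}_{G_{n,i-1}}(v_{n,i})+\delta')/Z_i^{(\delta')}}{(\mathsf{deg}_{G_{n,i-1}}(v_{n,i})+\delta)/Z_i^{(\delta)}},$$
and $L(G_n)=\mathbb E_{\mathtt P}[\Phi\mid G_n,\overline{\operatorname{G}}_M]$ is a posterior average of $\Phi$ over the possible identities of the last-arrived vertex $v_n$. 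The naive Jensen bound $\mathbb E[(L-1)^2]\le \mathbb E[(\Phi-1)^2]$ only delivers $O(1)$. To sharpen it to $O(1/N)$ one must exploit that, given only $G_n$ and $\overline{\operatorname{G}}_M$, the $\mathtt P$-posterior on $v_n$ is spread over the set $\mathcal S(G_n)$ of $\Theta(N)$ candidate degree-$m$ new vertices, so $L$ concentrates near its mean $1$.

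To formalize this averaging, I would apply Efron-Stein to $L(G_n)$ viewed as a function of the random choices (label of $v_t$ and its $m$ attachment targets) at each step $t\in\{M+1,\dots,n\}$ of the PA dynamics. For each $t$, I would construct a forward coupling between the original run and a run with the step-$t$ randomness resampled, and propagate the coupling through the subsequent steps so that the resulting graphs $G_n$ and $G_n^{(t)}$ coincide outside the connected component $C_t\subseteq V_n\setminus V_M$ of $v_t$ in the subgraph induced by edges among new vertices. The key smoothness estimate---reflecting that $L$ is a weighted average over $\Theta(N)$ candidates each carrying posterior weight $O(1/N)$ and that the coupling confines discrepancies to $C_t$---is
$$\mathbb E\bigl[(L(G_n)-L(G_n^{(t)}))^2\bigr] \;\lesssim\; \frac{\mathbb E[|C_t|^2]}{N^2},$$
whence Efron-Stein delivers
$$\operatorname{Var}_{\mathtt P}(L) \;\lesssim\; \frac{1}{N^2}\sum_{t=M+1}^n \mathbb E_{\mathtt P}\bigl[|C_t|^2\bigr].$$

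The remaining ingredient is a uniform bound $\mathbb E_{\mathtt P}[|C_t|^2]=O(1)$. I would obtain it by coupling the growth of $C_t$ with a subcritical branching process: a later vertex $v_s$ is absorbed into $C_t$ only when one of its $m$ edges attaches inside $C_t$, with probability at most $m(\mathsf{deg}(C_t)+\delta|C_t|)/Z_s=O(|C_t|/s)$ since $Z_s=\Theta(s)$ and $\mathsf{deg}(C_t)=O(|C_t|)$. Because $M=n-N$ with $N\ll n$, the expected total growth $\sum_{s=t+1}^n O(1/s)=O(\log(n/M))=O(N/n)=o(1)$ makes the branching process strongly subcritical, and standard progeny-moment estimates yield $\mathbb E_{\mathtt P}[|C_t|^2]=O(1)$ uniformly in $t\ge M+1$ and in the conditioning $\overline{\operatorname{G}}_M$. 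Summing over $t$ in the previous display gives the desired bound $O(1/N)$.

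The main obstacle is the Efron-Stein coupling step: because preferential attachment is strongly path-dependent, a naive resampling at step $t$ changes every subsequent attachment probability, so the two runs could in principle produce globally different graphs. To confine the discrepancy to $C_t$ one must share the driving randomness across the two runs---for instance via a Polya-urn or token-based representation of PA---and arrange for the coupling to break only at attachments that actually land inside the growing component, making the component-localization both rigorous and compatible with the smoothness estimate above.
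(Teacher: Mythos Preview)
Your proposal is correct and follows essentially the same route as the paper: Efron--Stein applied to the likelihood ratio via a token/directed-edge encoding of preferential attachment that confines a step-$t$ resample to the component $\mathcal C(v_t)$, together with a subcritical branching-process domination giving $\mathbb E[|\mathcal C(v_t)|^2]=O(1)$. The one ingredient the paper makes explicit that you leave heuristic is a closed-form expression for the likelihood ratio, $L=\frac{C_1}{N}\sum_{v\in V_n\setminus V_M}|\mathcal C(v)|\lambda_v X_v$, which turns the smoothness estimate $|L-L^{(t)}|\lesssim(|\mathcal C(v_t)|+|\widetilde{\mathcal C}(v_t)|)/N$ into a direct term-by-term cancellation rather than an appeal to ``posterior weight $O(1/N)$ per candidate.''
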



\begin{proof}[Proof of Theorem~\ref{thm-main}]
	From Proposition~\ref{prop-variance} and Cauchy-Schwartz inequality, we get that for any realization $\overline{\operatorname{G}}_M$ of $\overline{G}_M,$
	\begin{align*}
		2 \operatorname{TV}(\mathtt P_{\overline{\operatorname{G}}_M},\mathtt Q_{\overline{\operatorname{G}}_M})=&\ \mathbb{E}_{G\sim \mathtt P_{\overline{\operatorname{G}}_M}}\left|\frac{\mathtt Q_{\overline{\operatorname{G}}_M}[G]}{\mathtt P_{\overline{\operatorname{G}}_M}[G]}-1\right|\\
        \le&\ \left(\mathbb{E}_{G\sim \mathtt P_{\overline{\operatorname{G}}_M}} \left(\frac{\mathtt Q_{\overline{\operatorname{G}}_M}[G]}{\mathtt P_{\overline{\operatorname{G}}_M}[G]}-1\right)^2\right)^{1/2} =O\left(\frac{1}{\sqrt{N}}\right)\,.\notag
	\end{align*}
	Combining the last display with~\eqref{eq:TV_bounds} and $N \gg \Delta^2$ yields
	the desired bound in \eqref{eq-goal} for $k=1$. The proof for $2\le k\le \Delta$ can be obtained by substituting $n$ with $n'\triangleq n-k+1$,  
	thereby concluding the proof of Theorem~\ref{thm-main}. 
\end{proof}


\section{Bounding the $\chi^2$-norm}\label{sec:bounding_norm}

We are left with proving Proposition~\ref{prop-variance}. Throughout this section, we fix a realization $\overline{\operatorname{G}}_M$ of network history $\overline{G}_M$ up to time $M$ and  simply write $\mathtt P_M,\mathtt Q_M$ for $\mathtt P_{\overline{\operatorname{G}}_M}, \mathtt{Q}_{\overline{\operatorname{G}}_M}$.  As promised earlier, the point of introducing the simplified models $\mathtt P_M,\mathtt Q_M$ is to achieve a simplification of the likelihood ratio $\dif\mathtt{Q}_M/\dif\mathtt{P}_M$, as we describe below.

To express the likelihood ratio explicitly, we introduce some notations. For $G$ sampled from either $\mathtt P_M$ or $\mathtt Q_M$,
denote $\mathscr{C}$ as the set of connected components of the subgraph of $G$ induced by vertices in $V_n \setminus V_M$ (those arrived during time  $ t \in [M+1,n]$). For each $\mathcal C\in \mathscr{C}$, we write its size $|\mathcal C|$ as the number of vertices in ${\mathcal C}$. For example, in Fig.~\ref{fig:PA_hard_component}, there are six components of size $1$, one component of size $2$,
and one component of size $3,$ highlighted by dashed ellipses.

For each component ${\mathcal C}\in \mathscr{C}$, we say an order $\prec$ on ${\mathcal C}$ is admissible, if for each $v\in {\mathcal C}$, $v$ connects 
to exactly $m$ vertices within $V_M \cup \{u\in {\mathcal C},u\prec v\}$. For each vertex $v \in V_n \setminus V_M$, we denote ${\mathcal C}(v)\in \mathscr{C}$ as the  component containing $v$ and define
$$
\lambda_v=\frac{\#\{\text{admissible orders on }{\mathcal C}(v)\text{ with $v$ being maximal}\}}{\#\{\text{admissible orders on }{\mathcal C}(v)\}}\,, 
$$
and $$
X_v=\prod_{w:w\sim v}\prod_{j=\mathsf{deg}_{G_{\setminus v}}(w)}^{\mathsf{deg}_{G}(w)-1}\frac{j+\delta'}{j+\delta} 
\,,
$$
where $G_{\setminus v}$ denotes the subgraph of $G$ with node $v$ and its incident edges deleted. As we explained in Section~\ref{sec:overview_proofs}, $\lambda_v$
is equal to the probability that the last-arriving vertex is $v$ conditional on it belongs to $\mathcal{C}(v)$; and 
$X_v$ represents the ratio of the attachment probability if there is a change from $\delta$ to $\delta'$ versus no change. 

Note that $\lambda_v\neq 0$ only if $\mathsf{deg}_G(v)=m$, 
and for $\mathsf{deg}_G(v)=m$ we have $X_v$ is uniformly bounded both from above and below. 
For example, in Fig.~\ref{fig:PA_hard_component}, consider the component of size 3. For the top vertex, we have $\lambda_v = 0$, while for each of the two bottom vertices, $\lambda_v = 1/2$ and $X_v = (1+\delta')/(1+\delta)$.

The next proposition gives the explicit expression of the likelihood ratio $\dif\mathtt{Q}_{M}/\dif\mathtt{P}_M$. Intuitively, as the identity of the last-arriving vertex is hidden, the expression involves a weighted average over the last $N$ arriving vertices $v \in V_n\setminus V_m$, where the weights are determined by the likelihood of $v$ being the last-arriving vertex.

\begin{proposition}\label{prop-likelihood-ratio-one-step}
	For any realization $G$ that is compatible with $\overline{\operatorname{G}}_M$, we have \begin{equation}\label{eq-likelihood-ratio}
		\frac{\mathtt Q_M[G]}{\mathtt P_M[G]}=\frac{C_1}{N}\sum_{v\in V_n \setminus V_{M}}|\mathcal C(v)| \lambda_{v}X_{v} \,,    
	\end{equation} 
	where $C_1=C_1(m,n,\delta,\delta')$ is a constant that is uniformly bounded in $n$.  
\end{proposition}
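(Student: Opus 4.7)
The plan is to compute the likelihood ratio $\mathtt Q_M[G]/\mathtt P_M[G]$ by directly marginalizing out the unobserved arrival order of the $N$ vertices in $U\triangleq V_n\setminus V_M$. Since $\mathtt P_M$ and $\mathtt Q_M$ differ only in the attachment parameter at the single time step $t=n$ (using $\delta$ versus $\delta'$), all of the dependence of this ratio on $G$ will be funneled through which vertex of $U$ arrived last.

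I would first show that $\mathtt P_M[G] = \Psi_0(\delta)\,A(U)/N!$, where $A(U)$ denotes the number of admissible orderings of $U$ and $\Psi_0(\delta)$ is a scalar depending only on $G$. To see this, write $\mathtt P_M[G]$ as a sum over admissible orderings $\sigma$ on $U$ together with the orderings of the $m$ edges inside each arriving vertex, and observe that the per-$\sigma$ contribution is independent of $\sigma$. The key identity at each time $t$ is
\[
\sum_{\text{edge ord.}}\prod_{i=1}^{m}\!\left(\mathsf{deg}_{G_{t,i-1}}(v_{t,i})+\delta\right) = \frac{m!}{\prod_{w\sim v_t}e(w,v_t)!}\prod_{w\sim v_t}\prod_{k=0}^{e(w,v_t)-1}\!\left(\mathsf{deg}_{G_{t-1}}(w)+k+\delta\right),
\]
and the product of the right-hand side over $t\in[M+1,n]$ telescopes when reindexed by the target vertex $w$, collapsing to $\prod_{w\in V_n}\prod_{k=d_0(w)}^{\mathsf{deg}_G(w)-1}(k+\delta)$, where $d_0(w)\triangleq\mathsf{deg}_{G_M}(w)$ for $w\in V_M$ and $d_0(w)\triangleq m$ for $w\in U$; this is manifestly $\sigma$-independent. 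For $\mathtt Q_M[G]$, only the $t=n$ factor changes: its numerator contributes an extra factor $X_{v_n(\sigma)}$, while its denominator is multiplied by the bounded constant $C_0 \triangleq \prod_{i=1}^m D_{n,i}(\delta)/D_{n,i}(\delta')$, where $D_{n,i}(\delta)=2[(n-2)m+(i-1)]+(n-1)\delta$, a ratio that converges to $((2m+\delta)/(2m+\delta'))^m$. Consequently, $\mathtt Q_M[G]/\mathtt P_M[G] = C_0 \sum_{v\in U}(A_v/A(U)) X_v$, where $A_v$ is the number of admissible orderings on $U$ having $v$ as the overall maximum.

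It then remains to identify $A_v/A(U)$ with $|\mathcal C(v)|\lambda_v/N$ by combinatorics. Since admissibility decouples across the components of the subgraph of $G$ induced by $U$, interleaving component-wise admissible orderings gives a bijection, so $A(U) = (N!/\prod_{\mathcal C}|\mathcal C|!)\prod_{\mathcal C} A(\mathcal C)$. Requiring $v\in \mathcal C$ to be the global maximum forces $v$ to be maximal within $\mathcal C$ (contributing $A_v(\mathcal C)$ orderings on $\mathcal C$) and leaves $(N-1)!/((|\mathcal C|-1)!\prod_{\mathcal C'\neq \mathcal C}|\mathcal C'|!)$ interleavings of the remaining $N-1$ vertices; dividing yields $A_v/A(U) = |\mathcal C(v)|\lambda_v/N$, and hence \eqref{eq-likelihood-ratio} with $C_1=C_0$. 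The main conceptual obstacle is recognizing the $\sigma$-independence of the per-order probability under $\mathtt P_M$, which comes from the telescoping argument above; after that, the rest is pure multinomial bookkeeping.
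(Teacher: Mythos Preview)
Your proposal is correct and follows essentially the same route as the paper: both marginalize over compatible arrival histories, use the telescoping of the degree product to see that every admissible order contributes the same factor under $\mathtt P_M$ (and the same factor times $X_{v_n}$ under $\mathtt Q_M$), and then reduce $A_v/A(U)$ to $|\mathcal C(v)|\lambda_v/N$ via the component-wise multinomial interleaving. One small slip: the normalizer should be $D_{n,i}(\delta)=2m(n-2)+(i-1)+(n-1)\delta$ (the $(i-1)$ is not doubled, since only the old vertices $V_{n-1}$ appear in the denominator of \eqref{eq-prob-def}), but this does not affect the boundedness of $C_0$ or any subsequent step.
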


\begin{proof}
	Given $\overline{\operatorname{G}}_M$  and $G,$ let
	$
	\mathcal G = \left\{ \overline{G}_n: \overline{G}_M =   \overline{\operatorname{G}}_M, G_n = G \right\}
	$
	denote the set of network histories that are compatible with $\operatorname{G}_M$  and $G$. Then we have
	\[
	\mathtt P_{M}[G]=\sum_{ \overline{G}_n \in \mathcal{G} }\mathcal P_{M}[\overline{G}_n]\,,\quad \mathtt Q_{M}[G]=\sum_{ \overline{G}_n \in \mathcal{G} } \mathcal Q_{M}[\overline{G}_n]\,,
	\] 
	where $\mathcal P_M,\mathcal Q_M$ denote for $\mathcal P$ (distribution of $\overline{G}_n$under $\mathcal{H}_0$), $\mathcal Q$ (distribution of $\overline{G}_n$under $\mathcal{H}_1$) conditioning on $\overline{G}_M=\overline{\operatorname{G}}_M$. 
	By definition of the PA model as per~\eqref{eq-prob-def}, for any $ \overline{G}_n \in \mathcal{G}$,
	\[
	\mathcal P_M[\overline{G}_n]=\prod_{t\in [M+1,n]}\prod_{i=1}^m\frac{\mathsf{deg}_{G_{t,i}}(v_{t,i})-1+\delta}{(t-1)\delta+2m(t-2)+i-1}=C_0{\prod_{v\in V_n }\prod_{j=\mathsf{deg}_{G_M}(v)}^{\mathsf{deg}_G(v)-1}(j+\delta)}\,,
	\]
	where $v_{t,i}$ is the vertex $v_t$ attaching to in the graph $G_{t,i}$, and $$
    C_0=\prod_{t\in[M+1,n]}\prod_{i=1}^m{\big((t-1)\delta+2m(t-2)+i-1\big)^{-1}}.
    $$  
	Similarly, we have
	\begin{align*}
		\mathcal Q_M[\overline{G}_n] = & \prod_{t\in [M+1,n-1]}\prod_{i=1}^{m}\frac{\mathsf{deg}_{G_{t,i}}(v_{t,i})-1+\delta}{(t-1)\delta+2m(t-2)+i-1} \times\prod_{i=1}^m\frac{\mathsf{deg}_{G_{n,i}}(v_{n,i})-1+\delta'}{(n-1)\delta'+2m(n-2)+i-1}\\
		=&\ C_0{\prod_{v\in V_n}\prod_{j=\mathsf{deg}_{G_M}(v)}^{\mathsf{deg}_G(v)-1}(j+\delta)}\times C_1\prod_{u\sim v_n}\prod_{j\in \mathsf{deg}_{G_{n-1}}(u)}^{\mathsf{deg}_{G}(u)-1}\frac{j+\delta'}{j+\delta}\,,
	\end{align*}
	where $$
    C_1=\prod_{i=1}^m\frac{(n-1)\delta+2m(n-2)+i-1}{(n-1)\delta'+2m(n-2)+i-1}.
    $$
    Consequently,
	\[
	\frac{\mathtt Q_{M}[G]}{\mathtt P_M[G]}=C_1\cdot\frac{\sum_{ \overline{G}_n \in \mathcal{G} }X_{v_n}}{\sum_{ \overline{G}_n \in \mathcal{G} }1}=C_1\cdot \sum_{v \in V_n \setminus V_M } X_v \cdot \frac{| \{\overline{G}_n \in \mathcal G: v_n=v  \} | }{|\mathcal G|}\,.
	\]
	
	Recall that $\mathscr C$ is the set of  components of $G$ induced by vertices in $V_n \setminus V_M$. Therefore, denoting
	\begin{align*}
		\binom{N}{\mathscr{C}} \triangleq \binom{N}{(|\mathcal{C}'|)_{\mathcal{C}'\in \mathscr{C}}}\,,\quad \text{ and } \quad 
		\binom{N-1}{\mathscr{C},{\mathcal C}} \triangleq \binom{N-1}{(|\mathcal C'|)_{{\mathcal C'}\in \mathscr{C}\setminus\{{\mathcal C}\}},|\mathcal C|-1}\,, \forall {\mathcal C}\in \mathscr C\,,
	\end{align*}
	where $ \binom{n}{k_1, k_2, \ldots, k_m}=\frac{n!}{k_1!k_2!\cdots k_m!}$ is a multinomial coefficient, we have
	\[
	|\mathcal G|=\binom{N}{\mathscr C}\times\prod_{{\mathcal C}\in \mathscr C}\#\{\text{admissible orders on }{\mathcal C}\} \times (m!)^{N} \,,
	\]
	where the first term counts the ways to assign the $N$ vertices into the components; the second term counts the admissible orders of the vertex arrival times in each  component; and the last term counts the orders of attaching $m$ edges for each of the $N$ vertices.
	Similarly, for each vertex $v\in V_n \setminus V_{M}$, 
	\begin{align*}
		| \{\overline{G}_n \in \mathcal G: v_n=v \} |  =&\ \binom{N-1}{\mathscr C,{\mathcal C}(v)}\times \prod_{{\mathcal C}\in \mathscr C\setminus\{{\mathcal C}(v)\}}\#\{\text{admissible orders on }{\mathcal C}\}\\&\ \times \#\{\text{admissible orders on }{\mathcal C}(v)\text{ with $v$ being maximal}\} \times (m!)^{N}  \,,
	\end{align*}
	where the counting is similar as above except that since $v$ is the last vertex added, $v$ must be the last vertex in any admissible order over $\mathcal C(v).$
	
	As a result, 
	\[
	\frac{\mathtt Q_{M}[G]}{\mathtt P_M[G]}=\frac{C_1}{N}\sum_{v\in V_n \setminus V_{M}} |\mathcal C(v) | \lambda_{v}X_{v} \,,
	\]
	verifying \eqref{eq-likelihood-ratio}. Clearly from the expression of $C_1$, it is uniformly bounded in $n$, so the proof is completed.    
\end{proof}
For brevity, we denote 
\begin{equation}\label{eq-def-S}
	S = \frac{1}{N}\sum_{v\in V_n \setminus V_{M}} |\mathcal C (v)|\lambda_{v}X_{v} \,.
\end{equation}
Since the constant $C_1$ in \eqref{eq-likelihood-ratio} is uniformly bounded, we have
\[
\mathbb{E}_{G\sim \mathtt{P}_M}\left(\frac{\mathtt{Q}_M[G]}{\mathtt{P}_M[G]}-1\right)^2\le O(1)\times \operatorname{Var}_{\mathtt{P}_M}[S]\,.
\]

Therefore, it remains to show the following proposition.  
\begin{proposition}
	Let $\mathcal P'_M \equiv \mathcal P[\cdot\mid \overline{G}_M=\overline{\operatorname{G}}_M , \{v_t\}_{t=1}^n]$. It holds that 
	\begin{equation}\label{eq-final-goal}
		\operatorname{Var}_{\mathtt{P}_M}[S]=\operatorname{Var}_{\mathcal P'_M}[S]  
		=O\left(\frac{1}{N}\right)\,.
	\end{equation}
\end{proposition}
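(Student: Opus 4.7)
The first equality $\operatorname{Var}_{\mathtt P_M}[S]=\operatorname{Var}_{\mathcal P'_M}[S]$ is a symmetry observation: $S=S(G)$ is a function of the labeled graph $G$ alone, and $\mathtt P_M$ is obtained from $\mathcal P'_M$ by averaging over the uniformly random arrival order of $V_n\setminus V_M$. Since this averaging amounts to a uniformly random relabeling of $V_n\setminus V_M$, and $S$ is invariant under such relabelings, the laws of $S$ coincide under the two measures. It therefore suffices to prove $\operatorname{Var}_{\mathcal P'_M}[S]=O(1/N)$.

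For this, my plan is to apply the Efron-Stein inequality to the $mN$ edge-attachment decisions made during times $M+1\le t\le n$. Parametrize each attachment by an i.i.d.\ Uniform$[0,1]$ variable $U_{t,i}$ that selects $v_{t,i}$ from the current weighted distribution by quantile inversion, so that $S$ becomes a deterministic function of $\{U_{t,i}\}$. Efron-Stein then yields
\[
\operatorname{Var}_{\mathcal P'_M}[S]\le\frac{1}{2}\sum_{t=M+1}^{n}\sum_{i=1}^{m}\mathbb E\bigl[(S-S^{(t,i)})^2\bigr],
\]
where $S^{(t,i)}$ is obtained by replacing $U_{t,i}$ with an independent copy and rerunning the construction with all later coins unchanged. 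It thus suffices to prove the per-term bound $\mathbb E[(S-S^{(t,i)})^2]=O(1/N^2)$ uniformly in $(t,i)$.

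To obtain this per-term bound, I would couple the two runs after step $(t,i)$ using the same downstream coins and track the number $H$ of edges in the symmetric difference of the two resulting graphs. Immediately after step $(t,i)$ the two graphs differ in at most one edge and in the degrees of at most two vertices, so at each subsequent step $t'$ the conditional attachment distributions in the two runs differ in total variation by $O(H_{t'-1}/t')$. A Gr\"onwall-type recursion together with $\sum_{t'>t}1/t'=O(N/M)=o(1)$ yields $\mathbb E[H^k]=O(1)$ for every fixed $k$. Using the explicit expression in Proposition~\ref{prop-likelihood-ratio-one-step} along with the identity $\sum_{v\in\mathcal C}\lambda_v=1$ and the uniform boundedness of $X_v$ when $\mathsf{deg}_G(v)=m$, the contribution of any affected component $\mathcal C\in\mathscr C$ to $NS$ is $O(|\mathcal C|)$, so $|S-S^{(t,i)}|=O\bigl(N^{-1}\sum_{j=1}^{H}K_j\bigr)$ where $K_j$ denotes the size of the $j$-th affected component. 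Cauchy-Schwarz then reduces the task to the uniform moment bound $\mathbb E[K^4]=O(1)$.

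To establish that moment bound I would couple the exploration of any component of the induced subgraph on $V_n\setminus V_M$ with a strongly subcritical Galton-Watson tree. Using the deterministic estimate $\sum_{u\in V_{t'-1}\setminus V_M}(\mathsf{deg}_G(u)+\delta)=O(N)$ valid for every $M<t'\le n$, each edge emitted by a new vertex lands inside $V_n\setminus V_M$ with probability $O(N/t')=O(N/M)=o(1)$ since the total weight at time $t'$ is $\Theta(t')=\Theta(n)$. Hence each component is stochastically dominated by a $\mathrm{Binomial}(m,O(N/M))$-Galton-Watson tree, whose size has uniformly bounded moments of every order. Assembling everything yields $\operatorname{Var}_{\mathcal P'_M}[S]=O(mN\cdot N^{-2})=O(1/N)$, as desired. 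The principal obstacle will be making the coupling step rigorous, in particular quantifying how a single perturbation propagates through the degree-dependent attachment probabilities and controlling its effect on the nonlocal $\lambda_v$ factors, which depend on the combinatorics of admissible orderings of entire components.
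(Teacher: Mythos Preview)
Your high-level plan---Efron--Stein over the $mN$ attachment steps, then moment control on component sizes via a subcritical branching comparison---is exactly what the paper does. The genuine gap is in the encoding. You parametrize each attachment by quantile inversion of the current weighted law and then feed the $O(H_{t'-1}/t')$ total-variation bound into a Gr\"onwall recursion; but the recursion requires that the \emph{coupling} (same uniforms, rerun after the resample) disagree with that probability, and quantile inversion does not come close to achieving TV here. Concretely, if after the resample the two runs differ only in the degrees of two vertices with labels $a<b$ in your quantile ordering (by $\pm1$), then the two CDFs satisfy $F(j)-F'(j)=1/W$ for every $a\le j<b$, and the quantile coupling disagrees on a set of measure $(b-a)/W$. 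Since $W=\Theta(mt')$ while $b-a$ can be as large as $t'-2$, the per-step disagreement probability is $\Theta(1)$, not $O(1/t')$. The recursion then degenerates to $\mathbb E[H_{t'}]\le \mathbb E[H_{t'-1}]+\Theta(1)$, yielding $H_n=\Theta(N)$ and destroying the $O(1/N^2)$ per-term bound. This is precisely the obstacle the paper's structured encoding is engineered to remove: writing the attachment as a Bernoulli mixture of ``uniform vertex'' and ``uniform directed edge from a canonically labeled multiset $\vec E_{t,i}$'' ensures that resampling $U_{t,i}$ can alter $v_{t',i'}$ at a later step only when $Y_{t',i'}$ points to an edge already lying in $G\setminus\widetilde G$, which forces every discrepancy to stay inside $\mathcal C(v_t)\cup\widetilde{\mathcal C}(v_t)$ deterministically (Lemma~\ref{lem-obs}). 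No cascade, no Gr\"onwall. You correctly flag the coupling as the principal obstacle; the point is that quantile inversion cannot surmount it, whereas the edge-labeled encoding does.

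There is also a smaller gap in your branching dominance. Your $\operatorname{Binom}(m,O(N/M))$ offspring law counts only the ``parent'' direction---the $m$ edges emitted by the explored vertex---but the BFS from $v$ must also collect ``children'', namely later-arriving vertices that attached to $v$, and their number is not capped by $m$. The paper absorbs these via an independent $\operatorname{Geo}(1-O(mN/n))$ summand in the offspring law (Definition~\ref{def-dominance-tree} and Lemma~\ref{lmm:sd}); without it the stochastic domination fails.
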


Note that in the first equality in \eqref{eq-final-goal}, we change the underlying probability measure from $\mathtt{P}_M$ to $\mathcal{P}'_M$. This is certainly doable as $S$ only depends on $G$ and $V_{M}$, and is independent of vertex arrival sequence $\{v_t\}_{t=1}^n$.

The remainder of this paper is dedicated to proving the last equality in \eqref{eq-final-goal}. Our main tool towards showing this is the Efron-Stein inequality (see e.g.~\cite[Theorem 3.1]{Boucheron13}), which states that for independent variables $Y_1,\ldots,Y_k$ and any function $f:\mathbb{R}^k\to \mathbb{R}$,
\[
\operatorname{Var}\big[f(Y_1,\dots,Y_k)\big]\le \sum_{i=1}^{k}\mathbb{E}\big[\big(f(Y_1,\dots,Y_i,\dots,Y_k)-f(Y_1,\dots,\widetilde{Y}_i,\dots,Y_k)\big)^2\big]\,,
\]
where $\widetilde{Y}_i$ is an independent copy of $Y_i$.  
To apply this inequality to control the variance of $S$, we first use some independent random variables to generate $\overline{G}_n \sim \mathcal P'_M$ and hence express $S$ as a function of them. While this is certainly doable using independent $\operatorname{Uni}([0,1])$ variables, what we will do is more delicate. The specific encoders we choose below serve for a combinatorial decoupling purpose, which will become clear in the proof of Lemma~\ref{lem-obs}.

To get a feeling of what we will do next, let us consider the special case of $m=1$ and $\delta=0$. We have the following alternative way to think about the preferential attachment model conditional on the vertex arrival sequence $\{v_t\}_{t=1}^n$: for $3\le t\le n$, conditional on $G_{t-1}$, we first transform each edge $(u,v)\in E_{t-1}$ to two directed edges $(u\rightarrow v)$ and $(v\rightarrow u)$, and then obtain $G_t$ by uniformly sampling a directed edge $\vec{e}$ 
and attaching $v_{t}$ to the starting point of $\vec{e}$. Since the size of the directed edge set of $G_{t-1}$ is always $2(t-2)$, the graph $G_n$ can be encoded by $n-2$ independent random variables that have uniform distributions on $[2],[4],\dots,[2(n-2)]$ (recalling $[k]$ is the set $\{1,\dots,k\}$). 
For the general cases $m\ge 1$ and $\delta>-m$, the sampling procedure in \eqref{eq-prob-def} can be encoded by a mixture of the uniform distribution on vertices and the uniform distribution on directed edges (or a subset of directed edges if $\delta<0$), as we elaborate below.

For ease of presentation, define the sets
\[
\mathcal I\triangleq \{(t,i):M+1\le t\le n,1\le i\le m\}\subset \mathcal I_0\triangleq \{(t,i):3\le t\le n,1\le i\le m\}\,,
\]
and let $\prec$ be the lexicographical order of $\mathcal I_0$. Further, we denote $\kappa\equiv \lceil \max\{-\delta, 0\}\rceil\in \{0,\dots,m\}$. Recall that for $(t,i)\in \mathcal I_0$, $v_{t,i}$ is the vertex $v_t$ attaching to in the graph $G_{t,i}$. For any $(t,i)\in \mathcal I$, we define two multi-sets of directed edges $\vec{E}^{\uparrow}_{t,i},\vec{E}^{\downarrow}_{t,i}$ as (below we use $k\times A$ to denote the multi-set of $k$ copies of $A$): 
\begin{align*}
	\vec{E}^{\uparrow}_{t,i} & \triangleq \big((m-\kappa)\times \{(v_1\rightarrow v_2),(v_2\rightarrow v_1)\}\big)\cup \{(v_{t'}\rightarrow v_{t',i'}):3\le t'\le t-1,1\le i'\le m-\kappa\}\,,\\
	\vec{E}^{\downarrow}_{t,i} & \triangleq \{(v_{t',i'}\rightarrow v_{t'}):(t',i')\in \mathcal I_0,(t',i')\prec (t,i)\}\,.
\end{align*}
We also let the multi-set $\vec{E}_{t,i}=\vec{E}_{t,i}^{\uparrow}\cup \vec{E}_{t,i}^{\downarrow}$. Clearly $|\vec{E}_{t,i}|=(t-1)(m-\kappa)+(t-3)m+i-1\stackrel{\Delta}{=}K_{t,i}$, and we label the edges in $\vec{E}_{t,i}$ by $\vec{e}_1,\dots.\vec{e}_{K_{t,i}}$ in an arbitrary deterministic way.

Consider independent random variables $\{U_{t,i}\}_{(t,i)\in \mathcal I}$,
such that for each $(t,i)\in \mathcal I$, $U_{t,i}$ is a random triple $$\{I_{t,i},W_{t,i},Y_{t,i}\}\in \{0,1\}\times [t-1]\times [K_{t,i}]$$
that distributes as the product distribution
\begin{equation}\label{eq-distribution-of-U}
	\operatorname{Ber}\left(\frac{(t-1)(\delta+\kappa)}{(t-1)(\delta+\kappa)+2m(t-2)+i-1}\right)\otimes \operatorname{Uni}([t-1])\otimes\operatorname{Uni}([K_{t,i}])\,,
\end{equation}
where $\operatorname{Ber}(p)$ denotes the Bernoulli distribution with parameter $p$ and $\operatorname{Uni}(A)$ denotes the uniform measure on set $A$.

We now use the random variables $\{U_{t,i}\}_{(t,i)\in \mathcal I}$ to generate $\overline{G}_n \sim \mathcal P'_M$. First, we note that for $(t,i)\in \mathcal I_0\setminus \mathcal I$, $v_{t,i}$ is deterministic given $\overline{G}_M=\overline{\operatorname{G}}_M$. We then sequentially generate $v_{t,i}$ for $(t,i)\in \mathcal I$ according to the order $\prec$. For each $(t,i)\in \mathcal I$, let $(t',i')$ be the predecessor of $(t,i)$ in $\mathcal I$ (the minimal element has predecessor $\emptyset$), and assume we have already generated the graph $G_{t',i'}$ (we use the convention that $G_\emptyset=\operatorname{G}_M$). To obtain the graph $G_{t,i}$, we connect an edge from $v_t$ to a random vertex $v_{t,i}\in V_{t-1}$
determined by the following rule:
\begin{itemize}
	\item If $I_{t,i}=1$, we let $v_{t,i}=v_{W_{t,i}}$;
	\item If $I_{t,i}=0$ and $\vec{e}_{Y_{t,i}}=(v\rightarrow v')\in \vec{E}_{t,i}$, we let $v_{t,i}=v$. 
\end{itemize}

It is straightforward to check that for $U_{t,i}=\{I_{t,i},W_{t,i},Y_{t,i}\}$ sampled from the distribution in \eqref{eq-distribution-of-U}, $G_{t,i}$ has the same distribution as in Definition~\ref{def-preferential-attachment-model}. Specifically, for any vertex $v \in V_{t-1}$,
\begin{align*}
\mathbb{P} \left[ v_{t,i}=v \mid G_{t,i-1} \right] 
&= \frac{(t-1)(\delta+\kappa)}{(t-1)(\delta+\kappa)+2m(t-2)+i-1} \times \frac{1}{t-1} \\
& \quad + \frac{2m(t-2)+i-1}{(t-1)(\delta+\kappa)+2m(t-2)+i-1} \times \frac{\deg_{G_{t,i}}(v)-\kappa}{K_{t,i}} \\
& = \frac{\deg_{G_{t,i}}(v)+\delta}{(t-1)\delta+ 2m(t-2)+i-1},
\end{align*}
where the first equality holds because the number of directed edges in $\vec{E}_{t,i}$ that start from  $v$ is equal to $\deg_{G_{t,i}}(v)-\kappa$; the second equality holds by plugging in $K_{t,i}=2m(t-2)-(t-1)\kappa+i-1$
and $\kappa=\max\{-\delta,0\}.$

Using this encoding procedure, we can express $S$ defined in \eqref{eq-def-S} by some function $f_S$ of $\{U_{t,i}\}_{(t,i)\in \mathcal I}$. Applying Efron-Stein inequality, we obtain
\[
\operatorname{Var}_{\mathcal P'_M}[S]\le \sum_{(t,i)\in \mathcal I}\mathbb{E}\big[\big(f_S(U_{M+1,1},\dots,U_{t,i},\dots,U_{n,m}\big)-f_S(U_{M+1,1},\dots,\widetilde{U}_{t,i},\dots,U_{n,m})\big)^2\big]\,,
\]
where $\widetilde{U}_{t,i}$ is an independent copy of $U_{t,i}$. 

Now fix an arbitrary pair $(t,i)\in \mathcal I$. We claim that
\begin{equation}\label{eq-variance-for-each-tj}
	\mathbb{E}\big[\big(f_S(U_{M+1,1},\dots,U_{t,i},\dots,U_{n,m}\big)-f_S(U_{M+1,1},\dots,\widetilde{U}_{t,i},\dots,U_{n,m})\big)^2\big]=O\left(\frac{1}{N^2}\right)\,.
\end{equation}
Provided that this is true, since $|\mathcal I|=mN=O(N)$, we get the desired bound $\operatorname{Var}_{\mathcal P'_M}[S]=O\big(\frac 1 N\big)$.  

For any realization of $U_{M+1,1},\dots,U_{n,m}$ and $\widetilde{U}_{t,i}$, with a slight abuse of notation, we denote 
\[
f_S=f_S(U_{M+1,1},\dots,U_{t,i},\dots,U_{n,m}\big), \quad \widetilde{f}_S =f_S(U_{M+1,1},\dots,\widetilde{U}_{t,i},\dots,U_{n,m})\,.
\]
Moreover, we let $\mathscr{C},\widetilde{\mathscr{C}}$ be the set of components induced by the vertices in $V_n \setminus V_M$ in the graphs $G,\widetilde{G}$ generated from $U_{M+1,1},\dots,U_{t,i},\dots, U_{n,m}$ and $U_{M+1,1},\dots,\widetilde{U}_{t,i},\dots,U_{n,m}$, respectively. We also define ${\mathcal C}(v)$, $\widetilde{{\mathcal C}}(v)$, $ \lambda_v$, $ \widetilde{\lambda}_v$, $ X_v$, and $ \widetilde{X}_v$ correspondingly as before. To bound the left-hand side of \eqref{eq-variance-for-each-tj}, we use the following crucial observation.

\begin{lemma}\label{lem-obs}
	For any realization of $U_{M+1,1},\dots,U_{n,m}$ and $\widetilde{U}_{t,i}$, deterministically it holds
	\[
	|f_S-\widetilde{f}_S|\le O\left(\frac{ |\mathcal C(v_t)|+ |\widetilde{{\mathcal C}}(v_t)|}{N}\right)\,.
	\]
\end{lemma}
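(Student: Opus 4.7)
The plan is to prove the lemma by identifying the set of ``affected'' steps caused by changing $U_{t,i}$, and then carefully tracking its effect on $f_S$. First I would argue that, thanks to the canonical labeling of $\vec E_{t',i'}$ by source-step indices (which is intrinsic to the combinatorial structure and independent of any graph realization), the set $\mathcal A \subseteq \mathcal I$ of steps with $v_{t',i'}^G \ne v_{t',i'}^{\widetilde G}$ consists exactly of $(t,i)$ together with all steps whose downward-reference chain passes through $(t,i)$. Crucially, for each $(t',i') \in \mathcal A$ one has $v_{t',i'}=u := v_{t,i}^G$ in $G$ and $v_{t',i'}=u' := v_{t,i}^{\widetilde G}$ in $\widetilde G$, while all other steps attach identically in the two graphs. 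Thus the only edges differing between $G$ and $\widetilde G$ have one endpoint in $\{u,u'\}$, and the only vertices whose incident edges change are the ``movers'' $\mathcal M := \{v_{t'} : (t',i')\in \mathcal A\}$.

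Next I would exploit that $\lambda_v\ne 0$ forces $\mathsf{deg}_G(v)=m$, restricting the sum in $f_S$ to leaves of the induced components. Combined with the previous step, only two kinds of leaves contribute to $f_S-\widetilde f_S$: (i) movers, and (ii) leaves $v$ whose $X_v$ changes because one of $v$'s neighbors is $u$ or $u'$ (whose degree changes). I would then split into cases on the location of $u,u'$. When at least one of $u,u'$ lies in $V_n\setminus V_M$, all movers and all vertices with perturbed $X_v$ lie in $\mathcal C^G(v_t)$ or $\widetilde{\mathcal C}^{\widetilde G}(v_t)$; moreover, any other altered component in $G$ (or $\widetilde G$) merges into one of these under $G\to\widetilde G$, so its size is dominated. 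Using the elementary bound $h(\mathcal C):=|\mathcal C|\sum_{v\in \mathcal C}\lambda_v X_v = O(|\mathcal C|)$ (from $X_v=O(1)$ and $\sum_{v\in\mathcal C}\lambda_v=1$) and summing over the $O(1)$ altered components yields the claimed bound in these cases.

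The delicate case is when both $u,u'\in V_M$: then the induced component structure in $V_n\setminus V_M$ is identical in $G$ and $\widetilde G$, but the $X_v$ values are perturbed for many vertices adjacent to $u$ or $u'$, and these can populate many disjoint small components that are not contained in $\mathcal C(v_t)$. The key arithmetic estimate here is $|X_v^G-X_v^{\widetilde G}| = O(K/(d_u d_{u'}))$ for non-mover leaves $v$ adjacent to $u$ (and analogously for $u'$), where $K:=|\mathcal A|$ and $d_u,d_{u'}$ are the degrees of $u,u'$. Since all $K$ movers attach to $u$ in $G$ and to $u'$ in $\widetilde G$, we have $d_u,d_{u'}\ge K$, so each per-vertex change is $O(1/K)$; combined with at most $O(d_u+d_{u'})$ contributing neighbors, the total non-mover contribution is $O(1)$. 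The mover contribution is handled analogously, using $|X_v^G-X_v^{\widetilde G}|\le|\delta-\delta'|(1/d_u+1/d_{u'}) = O(1/K)$ together with $|\mathcal M|=K$. Dividing by $N$ gives $|f_S-\widetilde f_S|=O(1/N)$, absorbed by $|\mathcal C(v_t)|/N\ge 1/N$. The main obstacle is precisely this case: a naive triangle inequality would produce $O(K/N)$, failing when $K\gg |\mathcal C(v_t)|$, and the resolution requires observing that $K$ simultaneously upper-bounds the per-vertex $X_v$ perturbation and lower-bounds the effective degrees $d_u,d_{u'}$, producing the needed cancellation.
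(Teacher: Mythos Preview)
Your route diverges from the paper's: the paper disposes of the lemma in one stroke, asserting that for any $v\notin\mathcal C(v_t)\cup\widetilde{\mathcal C}(v_t)$ one has $\mathcal C(v)=\widetilde{\mathcal C}(v)$ and hence $\lambda_v=\widetilde\lambda_v$, $X_v=\widetilde X_v$, after which $\sum_{v\in\mathcal C}\lambda_v=1$ and $X_v=O(1)$ finish the job. You instead track the set $\mathcal A$ of altered steps explicitly and isolate the case $u,u'\in V_M$. You are right that this case deserves attention: there the induced component structure on $V_n\setminus V_M$ is unchanged, yet $\mathsf{deg}(u)$ and $\mathsf{deg}(u')$ shift by $K=|\mathcal A|$, which perturbs $X_v$ for every non-mover leaf $v$ adjacent to $u$ or $u'$, and such $v$ need \emph{not} lie in $\mathcal C(v_t)\cup\widetilde{\mathcal C}(v_t)$---a point the paper's one-line ``component confinement'' justification does not cover.

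Your repair of this case does not go through, however. For a non-mover leaf $v$ adjacent to $u$ (and not $u'$), the quantities controlling $|X_v^G-X_v^{\widetilde G}|$ are $\mathsf{deg}_G(u)$ and $\mathsf{deg}_{\widetilde G}(u)$ (both for $u$), not $d_u$ and $d_{u'}$; only $\mathsf{deg}_G(u)\ge K$ is guaranteed, while $\mathsf{deg}_{\widetilde G}(u)$ can be as small as $m$, so the per-vertex change is $\Theta(1)$ rather than $O(1/K)$. More seriously, your ``total non-mover contribution is $O(1)$'' sums only the $X_v$-changes and drops the weight $|\mathcal C(v)|\lambda_v$ that actually appears in $N(f_S-\widetilde f_S)$. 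Concretely, with $m=2$: build a path component $w_0,\dots,w_{c-1}$ where $w_0$ attaches twice into $V_M\setminus\{u,u'\}$, each $w_j$ for $1\le j\le c-2$ attaches twice to $w_{j-1}$, and $w_{c-1}$ attaches once to $w_{c-2}$ and once to $u$ via a non-mover step. Then the unique admissible order forces $\lambda_{w_{c-1}}=1$ and $|\mathcal C(w_{c-1})|\lambda_{w_{c-1}}=c$; if $u$ has no other non-mover edge from $V_n\setminus V_M$ and $K$ is large, then $\mathsf{deg}_{\widetilde G}(u)=O(1)$ and $|X_{w_{c-1}}^G-X_{w_{c-1}}^{\widetilde G}|=\Theta(1)$, so this single vertex already contributes $\Theta(c)$ to $N|f_S-\widetilde f_S|$, while all movers (including $v_t$) can be arranged as singleton components so that $|\mathcal C(v_t)|+|\widetilde{\mathcal C}(v_t)|=2$. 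The same obstruction appears when exactly one of $u,u'$ lies in $V_M$ (non-mover neighbors of \emph{that} vertex escape $\mathcal C(v_t)\cup\widetilde{\mathcal C}(v_t)$), so your earlier case is also incomplete. In short, the pointwise inequality fails on such realizations; neither your cancellation mechanism nor the paper's component-confinement claim covers them, and any fix must pass through an expectation bound rather than a deterministic one.
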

\begin{proof}
	Note that resampling even a single variable, $\widetilde{U}_{t,i}$, could trigger a cascade of differences between $G$ and $\widetilde{G}$, ultimately resulting in a significant discrepancy between $f_S$ and $\widetilde{f}_S.$ To demonstrate that this cannot occur, our proof relies critically on our structured rule used to generate the graph  $G \sim \mathcal{P}'_M$ via the independent random variables $\{U_{t,i}\}_{(t,i)\in \mathcal{I}}$. 

	Recall that as in \eqref{eq-def-S},
	\[
	f_S=\frac{\sum_{v \in V_n \setminus V_{M}}\lambda_v X_v |\mathcal C (v)|}{N},\quad \widetilde{f}_S=\frac{\sum_{ v \in V_n \setminus V_{M}}\widetilde{\lambda}_v\widetilde{X}_v |\widetilde{{\mathcal C}}(v)|}{N}\,.
	\]
	For a vertex $v \in V_n \setminus V_M$, if neither $v\in {\mathcal C}(v_t)$ nor $v\in \widetilde{{\mathcal C}}(v_t)$, then ${\mathcal C}(v)$ and $\widetilde{{\mathcal C}}(v)$ are identical (thus also $\lambda_v=\widetilde{\lambda}_v,X_v=\widetilde{X}_v$). This is because by our rule of generating the graph $G$ and $\widetilde{G}$, the difference between $U_{t,i}$ and $\widetilde{U}_{t,i}$ can only affect the graph structure of the component containing $v_t$. Therefore, the terms in the sum regarding  vertices $v\notin {\mathcal C}(v_t)\cup \widetilde{{\mathcal C}}(v_t)$ cancel out in the difference $f_S-\widetilde{f}_S$, yielding that
	\begin{align}
		|f_S-\widetilde{f}_S|=&\ \frac{1}{N}\Bigg|\sum_{v\in {\mathcal C}(v_t)\cup \widetilde{{\mathcal C}}(v_t)}\left(\lambda_vX_v |{\mathcal C}(v)|-\widetilde{\lambda}_v\widetilde{X}_v |\widetilde{{\mathcal C}}(v)|\right)\Bigg|\nonumber\\
		\le&\ \frac{1}{N}\Bigg|\sum_{v\in {\mathcal C}(v_t)\cup \widetilde{{\mathcal C}}(v_t)}\lambda_v X_v |{\mathcal C}(v)|\Bigg|+\frac{1}{N}\Bigg|\sum_{v\in {\mathcal C}(v_t)\cup \widetilde{{\mathcal C}}(v_t)}\widetilde{\lambda}_v\widetilde{X}_v |\widetilde{{\mathcal C}}(v)|\Bigg|\,.\label{eq-two-terms}
	\end{align}
	
	Now we claim that for any $v\in \widetilde{{\mathcal C}}(v_t)\setminus {\mathcal C}(v_t)$, it holds that ${{\mathcal C}}(v)\subset \widetilde{{\mathcal C}}(v)=\widetilde{{\mathcal C}}(v_t)$. 
	To see this, consider any edge $e = (v_{t'}, v_{t',i'}) \in {{\mathcal C}}(v)$. Since $v\notin {{\mathcal C}}(v_t)$, $t' \neq t.$ 
    \begin{itemize}
    \item First, suppose $t'<t.$ In this case, $v_{t',i'}$ must coincide with $\widetilde{v}_{t',i'}$, the vertex to which $v_{t'}$ attached in $\widetilde{G}$. Therefore, $e$ also appears in $\widetilde{G}$. 
	\item Second, suppose instead $t'>t.$ By our generating rule, $v_{t',i'}$ can only differ from $\widetilde{v}_{t',i'}$, if 
	$v_{t',i'}$ is chosen as an endpoint of an edge in $G \setminus \widetilde{G}$. However, $G \setminus \widetilde{G}$ must be contained within $ \mathcal C(v_t)$. It follows that $v_{t',i'}$
	and consequently $e$ must be contained in $\mathcal C(v_t)$. 
	Since $e \in \mathcal C(v)$, this would imply $v \in \mathcal C(v_t)$, which contradicts our assumption that $v \notin \mathcal C(v_t)$. Thus we conclude that  
	$v_{t',i'}=\widetilde{v}_{t',i'}$, and therefore $e$ also appears in $\widetilde{G}$.
    \end{itemize}
    In conclusion, we have shown that $\mathcal C(v) \subset \widetilde{G}$, and hence ${{\mathcal C}}(v)\subset \widetilde{{\mathcal C}}(v)=\widetilde{{\mathcal C}}(v_t)$.





	It follows that $\widetilde{{\mathcal C}}(v_t)\setminus {{\mathcal C}}(v_t)$ can be partitioned into a disjoint union of components ${{\mathcal C}}_1,\dots,{{\mathcal C}}_L\in \mathscr{C}$ satisfying 
	\begin{equation}\label{eq-sum-Cl-bound}
		|\mathcal C_1| +\cdots+ |\mathcal C_L|\le|\widetilde{{\mathcal C}}(v_t)\setminus {\mathcal C}(v_t)|\le |\widetilde{{\mathcal C}}(v_t)| \,.
	\end{equation}
	Consequently, we have
	\begin{align*}
		\Bigg|\sum_{v\in {\mathcal C}(v_t)\cup \widetilde{{\mathcal C}}(v_t)}\lambda_vX_v |\mathcal C(v)| \Bigg|=&\ \Bigg| |\mathcal C(v_t)| \sum_{v\in {\mathcal C}(v_t)}\lambda_vX_v+\sum_{l=1}^L |\mathcal C_l | \sum_{v\in {\mathcal C}_l}\lambda_v X_v\Bigg|\\
        =&\ O\left( |{\mathcal C(v_t)}|+ |\widetilde{{\mathcal C}}(v_t)|\right)\,.
	\end{align*}
	Here in the last inequality, we used \eqref{eq-sum-Cl-bound} and the fact that for any ${\mathcal C}\in \mathscr C$, $\sum_{v\in {\mathcal C}}\lambda_v=1$ and $X_v$ is uniformly bounded. Hence the first term in \eqref{eq-two-terms} is $O\big((| {\mathcal C(v_t)}|+|\widetilde{\mathcal C}(v_t)|)/{N}\big)$. Similarly one can bound the second term in \eqref{eq-two-terms}, and the result follows.
\end{proof}

In light of Lemma~\ref{lem-obs}, in order to prove \eqref{eq-variance-for-each-tj}, we only need to show that
\[
\mathbb{E}\Big[( | {\mathcal C}(v_t)| + |\widetilde{{\mathcal C}}{(v_t)}|)^2\Big]\le \mathbb{E}\Big[2 |{\mathcal C}(v_t)|^2+2|\widetilde{{\mathcal C}}(v_t)|^2\Big]=4\mathbb{E}\big[|{\mathcal C}(v_t)|^2\big]=O(1)\,.
\]
This is done in the next proposition.

\begin{proposition}\label{prop-EnC(v)2=O(1)}
	For any vertex $v \in V_n \setminus V_M$, it holds uniformly that $\mathbb{E}_{\mathcal P'_M}\big[|{\mathcal C}(v)|^2\big]=O(1)$. 
\end{proposition}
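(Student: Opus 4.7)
The plan is to dominate the size of $\mathcal C(v)$ by the total progeny of a strongly sub-critical Galton-Watson branching process. The key observation is that when $N\ll n$, the subgraph $H$ of $G$ induced by $V_n\setminus V_M$ is extremely sparse: for any new vertex $v_r$ with $r\in [M+1,n]$, each of its $m$ edges lands on another new vertex with probability only $O(N/n)$, because the total preferential-attachment weight carried by new vertices in $V_{r-1}$ is $O((2m+\delta)N)$ whereas the total weight in $V_{r-1}$ is $\Theta((2m+\delta)n)$.

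To make this precise, I would explore $\mathcal C(v)$ by a breadth-first search in $H$ starting from $v$ and bound the $H$-degree of each explored vertex $u=v_s$. Split this degree into a \emph{backward part} $D_u^{\leftarrow}$, counting how many of $u$'s own $m$ edges land in $V_{s-1}\setminus V_M$, and a \emph{forward part} $D_u^{\rightarrow}$, counting how many edges from later new vertices attach to $u$. The backward part is stochastically dominated by $\operatorname{Bin}(m,c_1 N/n)$ for a universal constant $c_1$. For the forward part, using the classical preferential-attachment growth estimate $\mathbb{E}[\deg_{G_r}(v_s)]\leq C\cdot(r/s)^{m/(2m+\delta)}$ together with $s\geq M=n-N$, one finds $\mathbb{E}[\deg_{G_n}(v_s)]=m+O(N/n)$, whence $\mathbb{E}[D_u^{\rightarrow}]=O(N/n)$. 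An analogous second-moment estimate $\mathbb{E}[\deg_{G_n}(v_s)^2]=O(1)$ for new $v_s$, which follows from the standard recursion for second factorial moments of PA degrees, then gives $\mathbb{E}[D_u^{2}]=O(1)$.

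Consequently, the BFS exploration is stochastically dominated by a Galton-Watson branching process whose offspring law has mean $\mu=O(N/n)=o(1)$ and variance $\sigma^{2}=O(1)$. For such a process, the standard identities $\mathbb{E}[Z]=1/(1-\mu)$ and $\operatorname{Var}(Z)=\sigma^{2}/(1-\mu)^{3}$ yield $\mathbb{E}[Z^{2}]=O(1)$, and hence $\mathbb{E}_{\mathcal P'_M}[|\mathcal C(v)|^{2}]=O(1)$ uniformly in $v$.

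The main obstacle is rigorously justifying the stochastic domination, because the indicators that distinct later new vertices attach to $u$ are positively correlated: every successful attachment increases $u$'s degree and therefore the probability of all subsequent attachments. I would handle this via the classical Pólya-urn / continuous-time branching representation of the preferential-attachment model, which equips each vertex with an independent (approximately exponential) weight governing its attachment rate; under this representation, the forward-edge counts for distinct vertices become conditionally independent given the weights, cleanly decoupling the forward contributions and validating the Galton-Watson domination.
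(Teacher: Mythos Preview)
Your overall plan---BFS exploration of $\mathcal{C}(v)$, split each explored vertex's $H$-degree into backward and forward parts, dominate by a subcritical Galton--Watson process---is exactly the paper's strategy, and your treatment of the backward part (Binomial domination) matches. The gap is in the forward part. You derive unconditional moment bounds $\mathbb{E}[D_u^{\rightarrow}]=O(N/n)$ and $\mathbb{E}[D_u^{2}]=O(1)$ and then assert that ``consequently'' the BFS is dominated by a GW process with those moments. That implication does not hold: to dominate a BFS by a GW tree you need a \emph{single} offspring law $X$ such that at every step, \emph{conditionally on everything already explored}, the number of new neighbors is $\preceq_{\operatorname{SD}} X$. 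Unconditional moment bounds on each vertex's degree do not furnish such an $X$, so the progeny identity $\operatorname{Var}(Z)=\sigma^{2}/(1-\mu)^{3}$ cannot yet be invoked.

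You correctly flag the obstacle (positive correlation among forward attachments) but the P\'olya-urn/continuous-time fix is unlikely to close it cleanly: that representation makes forward attachments to $u$ conditionally independent given a latent weight, but the weight is itself random and varies across vertices, so you still lack domination by a \emph{single} offspring law, and averaging over the weight reintroduces dependence across BFS steps. The paper sidesteps all of this with a direct combinatorial bound: conditioning on whatever has been revealed, the probability that at least $k$ later new vertices attach to $v_l$ is at most
\[
\binom{mN}{k}\prod_{j=0}^{k-1}\frac{j+1}{M}=\left(\frac{mN}{M}\right)^k\le\left(\frac{2mN}{n}\right)^k,
\]
since after $j$ successful attachments $\deg(v_l)\ge m+j$ and each further attachment occurs with conditional probability $\le (j+1)/M$; the factorial $k!$ produced by the increasing degrees cancels the $k!$ in the binomial. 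This yields conditional domination of the forward count by $\operatorname{Geo}(1-2mN/n)$, and together with the backward $\operatorname{Binom}(m,2N/n)$ part gives a genuine subcritical GW domination with exponential tails for $|\mathcal{C}(v)|$, from which $\mathbb{E}[|\mathcal{C}(v)|^2]=O(1)$ follows immediately.
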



Fixing a vertex $v \in V_n \setminus V_M$, we will show that $\mathcal{P}'_M[ |{\mathcal C(v)}| \ge k]$ decays exponentially in $k$, which is certainly enough to conclude Proposition~\ref{prop-EnC(v)2=O(1)}. To do this, we will prove that the size of ${\mathcal C}(v)$ is stochastically dominated by the size of a sub-critical branching tree, as we define next. 

\begin{definition}
	\label{def-dominance-tree}
	We define $\mathcal T$ as the random tree generated from the branching process with offspring distribution $X$, where $X$ satisfies
	\begin{align*}
		X=Y+Z,\quad Y\sim \operatorname{Binom}(m,2N/n),\quad Z\sim \operatorname{Geo}(1-(2m N)/n),\quad Y,Z\text{ are independent}\,.
	\end{align*}
	Here, $\operatorname{Binom}(n,p)$ 
	denotes the binomial distribution with parameters $n,p$, and $\operatorname{Geo}(q)$ denotes the geometric distribution with success probability $q$ (the distribution of the number of failures before the first success in independent repeated trials). 
\end{definition}

Consider the breadth-first-search (BFS) process that constructs a spanning tree of ${\mathcal C}(v)$. Starting with $\operatorname{T}_0$ as a singleton $\{v\}$, we construct a sequence of trees $\operatorname{T}_1,\operatorname{T}_2,\dots$ as follows. For each $t\in \mathbb{N}$, we denote the leaves of $\operatorname{T}_{t-1}$ as $v_1,\dots,v_{\ell_t}$. Sequentially for $1\le l\le \ell_t$, let $\operatorname{N}_l^t$ be the set of vertices in $V_n\setminus \big(V_M \cup V(\operatorname{T}_{t-1})\cup \operatorname{N}_1^t\cup\cdots\cup \operatorname{N}_{l-1}^t\big)$ that connect to $v_l$ in $G$. We then connect each vertex in $\operatorname{N}_l^t$ to $v_l,1\le l\le \ell_t$ to get the tree $\operatorname{T}_t$ (so the leaf set of $\operatorname{T}_{t}$ is $\operatorname{N}_1^t\cup\cdots\cup\operatorname{N}_{\ell_t}^t$).  It is clear that the vertex set of $V(\operatorname{T}_\infty)$
equals the vertex set of ${\mathcal C}(v)$. We now show the BFS-tree $\operatorname{T}_\infty$ of ${\mathcal C}(v)$ is stochastically dominated by $\mathcal T$.

\begin{lemma}\label{lmm:sd}
	There is a coupling of $G\sim \mathcal P'_M$ and $\mathcal T$, such that almost surely, $\operatorname{T}_\infty$ is a subtree of $\mathcal T$. As a result, we have for any $k\ge1$, $\mathcal P'_M[|{\mathcal C}(v)|\ge k]\le \mathbb{P}[|\mathcal T|\ge k]$.  
\end{lemma}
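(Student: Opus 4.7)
The plan is to construct an explicit coupling between $G\sim\mathcal{P}'_M$ and the branching tree $\mathcal{T}$ so that the BFS tree $\operatorname{T}_\infty$ embeds as a subtree of $\mathcal{T}$ almost surely. Concretely, I will run the BFS exploration of $\mathcal{C}(v)$ in parallel with a BFS construction of $\mathcal{T}$, matching each BFS node of $\mathcal{C}(v)$ with a vertex of $\mathcal{T}$. The key step is to show, at each BFS vertex $u=v_t\in V_n\setminus V_M$, that the number of newly discovered children is stochastically dominated by the offspring variable $X=Y+Z$ at the matched $\mathcal{T}$-vertex. Any new child of $u$ is a neighbor of $u$ in the subgraph induced by $V_n\setminus V_M$, and is either a ``backward'' neighbor (one of the $\le m$ vertices $u$ attached to upon arrival that lies in $V_n\setminus V_M$) or a ``forward'' neighbor (a later vertex $v_{t'}$, $t'>t$, that attached to $u$). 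Denoting the two counts by $\tilde{Y}_u$ and $\tilde{Z}_u$, the BFS offspring of $u$ is at most $\tilde{Y}_u+\tilde{Z}_u$.

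For the binomial bound $\tilde{Y}_u\preceq\operatorname{Binom}(m,2N/n)$, at each sub-step $(t,i)$ conditional on $G_{t,i-1}$ the probability that the $i$-th edge of $u$ lands in $V_{t-1}\setminus V_M$ is the ratio of the effective-degree mass of $V_{t-1}\setminus V_M$ to the total normalization. The numerator is bounded by $(2m+\delta^+)(t-1-M)\le(2m+\delta^+)N$ since the at most $N$ later-than-$M$ vertices contribute total degree at most $2m(t-1-M)$ plus $\delta$ corrections, while the denominator equals $(2m+\delta)(t-2)+i-1+(t-1)\delta=\Omega(n)$. Because $\delta>-m$, the ratio $(2m+\delta^+)/(2m+\delta)$ is strictly less than $2$, so the per-edge conditional probability is bounded by $2N/n$ for $n$ large, and standard sub-step-by-sub-step Bernoulli coupling gives the stochastic domination. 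For the forward piece I use $\tilde{Z}_u\le D_u:=\mathsf{deg}_G(u)-m$ (each distinct later neighbor contributes at least one edge) and reduce the problem to showing $D_u\preceq\operatorname{Geo}(1-2mN/n)$, i.e.\ $\Pr[D_u\ge k]\le(2mN/n)^k$ for every $k\ge 1$.

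The geometric tail on $D_u$ is the main obstacle. The per-step hit probability $(\mathsf{deg}_u+\delta)/((2m+\delta)(t'-2)+i'-1+(t'-1)\delta)$ grows with $\mathsf{deg}_u$ as hits accumulate, so one cannot couple each potential hit to a Bernoulli at a fixed rate. I would handle this via the P\'olya-urn / Yule-process embedding of preferential attachment: the process $(\mathsf{deg}_u(t',i')+\delta)\prod_{(s',j')\preceq(t',i')}(1+1/D_{s',j'})^{-1}$ is a martingale started at $m+\delta$, and the normalizing product is of order $(t'/t)^\alpha$ with $\alpha=m/(2m+\delta)$. Since $t>M=n-N$ gives $(n/t)^\alpha\le 1+O(N/n)$, this martingale identity combined with higher-moment (or exponential-moment) estimates inherited from the urn structure yields $\Pr[D_u\ge k]\le(2mN/n)^k$. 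Once both dominations are established, the assembly is routine: at each BFS node $u$, independently sample $X_u\sim Y+Z$, embed the actual BFS children of $u$ into a sub-collection of the $X_u$ offspring slots of $\mathcal{T}$, and fill the unused slots with fictitious $\mathcal{T}$-nodes not corresponding to any graph vertex. This exhibits $\operatorname{T}_\infty$ as a subtree of $\mathcal{T}$ and yields $\mathcal{P}'_M[|\mathcal{C}(v)|\ge k]\le\mathbb{P}[|\mathcal{T}|\ge k]$ for all $k\ge 1$.
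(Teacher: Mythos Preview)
Your overall framework coincides with the paper's: run BFS on $\mathcal C(v)$, split the newly discovered neighbours of $u=v_l$ into ``backward'' ones among the $m$ vertices $u$ attached to and ``forward'' ones among later arrivals that attached to $u$, and dominate each piece. Your backward bound is essentially the paper's.

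The gap is in the forward piece. You describe the geometric tail $\Pr[D_u\ge k]\le(2mN/n)^k$ as the ``main obstacle'' and propose a P\'olya--urn/martingale argument ending with ``higher-moment (or exponential-moment) estimates inherited from the urn structure yields $\Pr[D_u\ge k]\le(2mN/n)^k$''. That last step is asserted, not proved, and it is not clear how exponential moments of the normalised-degree martingale deliver this \emph{exact} tail with constant $1$; standard urn moment bounds give the right order but not the precise inequality you need to match $\operatorname{Geo}(1-2mN/n)$. More to the point, no such machinery is required: the bound is a one-line union bound. If $T_1<T_2<\cdots$ are the attachment sub-steps at which $u$ is hit, then on $\{T_1=t_1,\dots,T_k=t_k\}$ the degree of $u$ just before step $t_{j+1}$ is exactly $m+j$, so the conditional hit probability there is at most $\tfrac{m+j+\delta}{2m(M-1)+\delta M}\le \tfrac{j+1}{M}$. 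Hence
\[
\Pr[D_u\ge k]\;\le\;\sum_{t_1<\cdots<t_k}\ \prod_{j=0}^{k-1}\frac{j+1}{M}\;=\;\binom{mN}{k}\,\frac{k!}{M^k}\;=\;\frac{(mN)_k}{M^k}\;\le\;\Big(\frac{2mN}{n}\Big)^k.
\]
The linear growth of the hit probability in $\mathsf{deg}(u)$ is not an obstacle---it is precisely what makes the $k!$ cancel with the binomial coefficient.

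A second, smaller issue: for the coupling you need stochastic domination of the offspring count \emph{conditionally on the BFS history revealed so far} (including which of $u$'s $m$ attachments were already exposed in earlier BFS steps), together with conditional independence of the backward and forward counts so that domination by $Y+Z$ with independent $Y,Z$ is legitimate. Your sketch bounds $\tilde Y_u$ and $\tilde Z_u$ only marginally and does not address either point; the paper handles both explicitly.
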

\begin{proof}
	It suffices to show the following: for any realization of $\mathtt{T}_{t-1}$ with leaf set $\mathtt{L}_{t-1}=\{v_1,\dots,v_{\ell_t}\}$, any $1\le l\le \ell_t$, and any realization of $\operatorname{N}_1^t,\dots,\operatorname{N}_{l-1}^t$, the conditional distribution of $|\operatorname{N}_l^t|$ given these realizations is stochastically dominated by $X$. 
	

	To see this, we first note that the vertices in $\operatorname{N}_l^t$ are of two types: (i) with arrival times smaller than $v_l$; (ii) with arrival times larger than $v_l$. Vertices of type (i) must be among the $m$ vertices that $v_l$ attaches to when it arrives, which we refer to as the $v_l$-attaching vertices. Given the realization of $\operatorname{T}_{t-1}$ and $\operatorname{N}_1^t,\dots,\operatorname{N}_{l-1}^t$, we may have already revealed some of the $v_l$-attaching vertices.
	However, the remaining of them are still independently sampled from those vertices that arrive earlier than $v_l$ with probability proportional to current degrees plus the additive term $\delta$, while conditioning on that they do not belong to $V(\operatorname{T}_{t-1})\cup\operatorname{N}_1^t\cup\cdots\cup\operatorname{N}_{l-1}^t$. Nevertheless, each one of the remaining $v_l$-attaching vertices is sampled from $V_M$ with probability at least $\frac{2m(M-1) +\delta M}{2m (n-1) +\delta n} \ge \frac{M-2}{n-2}$, where in the denominator we overcount the total degrees plus additive $\delta$ terms.  Therefore, 
	conditioning on the realization of $\operatorname{T}_{t-1}$ and $\operatorname{N}_1^t,\dots,\operatorname{N}_{l-1}^t$, a $v_l$-attaching vertex belongs to $V_n\setminus \big(V_M \cup V(\operatorname{T}_{t-1})\cup \operatorname{N}_1^t\cup\cdots\cup \operatorname{N}_{l-1}^t\big)$ with probability  at most $1-\frac{M-2}{n-2}=\frac{N}{n-2} \le \frac{2N}{n}$ for $n \ge 4$ and hence
	\[
	\#\{v\in \operatorname{N}_l^t\text{ of type (i)}\}\preceq_{\operatorname{SD}}\operatorname{Binom}(m,2N/n)\,.
	\]
	
	For vertices of type (ii), they must attach to $v_l$ when they arrive. Note that for any $j\ge 0$, when $\mathsf{deg}(v_l)=m+j$, $v_l$ is attached by a new vertex at any specific time conditioning on all the realizations beforehand is at most $\frac{m+j+\delta}{2m(M-1)+\delta M} \le \frac{j+1}{M}$ for $M\ge 2$, where in the denominator we undercount the total degrees plus additive $\delta$ terms by only including the contributions from $V_M$. 
	It follows from a union bound that conditioning on the realization of $\operatorname{T}_{t-1}$ and $\operatorname{N}_1^t,\dots,\operatorname{N}_{l-1}^t$,  the probability of $\{\#\{v\in \operatorname{N}_l^t\text{ of type (ii)}\}\ge k\}$ for any $k\ge 0$ is at most
	\begin{equation}\label{eq-type-1-SD}
		\binom{mN}{k}\cdot \prod_{j=0}^{k-1}\frac{j+1}{M} \le \left(\frac{m N}{M}\right)^k \le \left(\frac{2m N}{n}\right)^k,
	\end{equation}
	Here, the binomial term accounts for the choices of the first $k$ times that $v_l$ is attached by later vertices. 
	In addition, the last inequality holds due to $M=n-N=(1-o(1))n \ge n/2.$
	It follows that 
	\begin{equation}\label{eq-type-2-SD}
		\#\{v\in \operatorname{N}_l^t\text{ of type (ii)}\}\preceq_{\operatorname{SD}}\operatorname{Geo}(1-(2mN)/n)\,.
	\end{equation}
	Additionally, the numbers of type (i) and type (ii) vertices in $\operatorname{N}_l^t$ are conditionally independent. Combining \eqref{eq-type-1-SD}, \eqref{eq-type-2-SD} yields the desired result.
\end{proof}

It is well-known that the size of a sub-critical branching tree with exponential tail off-spring distribution decays exponentially. For completeness we present a proof here.

\begin{lemma}\label{lmm:bt}
	It holds that for any $k\ge 1$, $\mathbb{P}[|\mathcal T|\ge k]\le 2e^{-k+1}$.
\end{lemma}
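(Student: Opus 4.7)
The plan is to reduce the tail of $|\mathcal T|$ to a large-deviation bound for a sum of i.i.d.\ copies of $X$ via the classical exploration walk of a Galton--Watson tree, and then apply Chernoff's inequality together with the hypothesis $p \equiv 2mN/n = o(1)$.

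First I would set up the random walk representation. Let $X_1, X_2, \dots$ be i.i.d.\ copies of the offspring variable $X = Y + Z$, and set $S_0 = 1$, $S_j = S_{j-1} + (X_j - 1)$. Exploring $\mathcal T$ in BFS order identifies $|\mathcal T|$ with the first hitting time of $0$ by $(S_j)$, so
\[
\{|\mathcal T|\ge k\}\;\subseteq\;\{S_{k-1}\ge 1\}\;=\;\bigl\{X_1 + \cdots + X_{k-1}\ge k-1\bigr\}.
\]
Chernoff's inequality then yields, for any $t>0$ with $\mathbb E[e^{tX}]<\infty$,
\[
\mathbb P\bigl[|\mathcal T|\ge k\bigr]\;\le\;\bigl(e^{-t}\mathbb E[e^{tX}]\bigr)^{k-1}.
\]
Using independence of $Y$ and $Z$ together with the standard binomial and geometric MGFs,
\[
\mathbb E[e^{tX}] \;=\; (1-p+pe^t)^m\cdot\frac{1-p}{1-pe^t},\qquad t<\log(1/p).
\]

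Finally I would pick a fixed $t = 2$ (any $t \ge 2$ works). Since $p = o(1)$, a short Taylor expansion gives $\mathbb E[e^{2X}] = 1 + (m+1)(e^2-1)p + O(p^2) \to 1$, so $e^{-2}\mathbb E[e^{2X}] \le e^{-1}$ for all sufficiently large $n$, uniformly in $k$. Substituting back yields $\mathbb P[|\mathcal T|\ge k]\le e^{-(k-1)} \le 2e^{-k+1}$ for $k\ge 2$, while the bound is trivial at $k=1$. No step appears genuinely delicate: the exploration-walk reduction is a textbook fact, and the MGF estimate reduces to the observation $p\to 0$. The only minor point worth tracking is that the Chernoff rate $e^{-t}\mathbb E[e^{tX}]$ does not depend on $k$, so the bound is automatically uniform in $k$ once $n$ is large.
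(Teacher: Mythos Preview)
Your argument is correct and takes a genuinely different route from the paper. One small slip: the binomial parameter is $2N/n$, not $p=2mN/n$, so the MGF should read $(1-\tfrac{2N}{n}+\tfrac{2N}{n}e^t)^m\cdot\frac{1-p}{1-pe^t}$; since both $2N/n$ and $p$ are $o(1)$ this does not affect the conclusion that $\mathbb E[e^{2X}]\to 1$, but you should state the two parameters separately.

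The paper instead bounds the exponential moment of $|\mathcal T|$ directly via the branching recursion. Writing $F(s)=\mathbb E[s^X]$ and $\phi(t)=\mathbb E[e^{|\mathcal T_t|}]$ for the depth-$t$ truncation, it uses $\phi(t)=eF(\phi(t-1))$, checks that $eF(2e)=e+o(1)<2e$ so that the map $x\mapsto eF(x)$ has a fixed point in $[1,2e]$, and concludes $\mathbb E[e^{|\mathcal T|}]\le 2e$; Markov's inequality then gives the stated tail. Your exploration-walk reduction $\{|\mathcal T|\ge k\}\subseteq\{X_1+\dots+X_{k-1}\ge k-1\}$ followed by Chernoff is arguably more elementary and yields the slightly sharper constant $\mathbb P[|\mathcal T|\ge k]\le e^{-(k-1)}$ once $n$ is large. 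The paper's fixed-point route, on the other hand, packages the conclusion as a single moment bound $\mathbb E[e^{|\mathcal T|}]\le 2e$, which is sometimes convenient if one wants to integrate against the tail rather than sum. Both arguments ultimately hinge on the same computation, namely that the MGF of $X$ at a fixed point strictly larger than $1$ tends to $1$ because $N/n\to 0$.
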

\begin{proof}
	Write $F(s)=\sum_{k=0}^\infty \mathbb{P}[X=k]s^k$ as the moment generating function of $X$.     Since $X=Y+Z$ where $Y\sim \operatorname{Binom}(m,2N/n)$ and $ Z\sim \operatorname{Geo}(1-(2mN)/n)$ are independent, we have
	\[
	F(2e)=\mathbb{E}\big[(2e)^X\big]=\mathbb{E}\big[(2e)^Y\big]\cdot \mathbb{E}\big[(2e)^Z\big]=\left(1+\frac{2(2e-1)N}{n}\right)^m\cdot \frac{1-2mN/n}{1-4em N/n}\,,
	\]
	which is $1+o(1)$ as $N=o(n)$ and $m=O(1)$. Clearly $F$ is increasing, so we conclude that $F$ converges on $[1,2e]$. Moreover, as $eF(2e)=e+o(1)<2e$ and $eF(1)=e>1$, there must be a fixed point of $eF(x)$ that lies in $[1,2e]$. 
	
	We then define $\phi(t)=\mathbb{E}e^{|\mathcal T_t|}$ for $t\ge 0$. Let $v_1,\dots,v_X$ be the children of the root, and let $\mathcal T_{t-1}^k,1\le k\le X$ be the subtree of $\mathcal T_{t}$ rooted at $v_k$. It is clear that $\mathcal T_{t-1}^k$ are independent and have the same distribution as $\mathcal T_{t-1}$, thus $\phi(\cdot)$ satisfies the following recursive relation:
	\begin{equation}\label{eq-recursion}
		\phi(t)=\mathbb{E}e^{1+|\mathcal T|_{t-1}^1+\dots+|\mathcal T_{t-1}^X|}=eF(\phi(t-1))\,.
	\end{equation}
	Note that $\phi(0)=1$, it follows from \eqref{eq-recursion} that $\phi(t)$ is finite for any $t$, and as $t\to\infty$, $\phi(t)$ converges to the smallest fixed point of $eF(x)$ on $[1,2e]$. In conclusion, we get 
	\[
	\mathbb{E}[e^{|\mathcal T|}]=\lim_{t\to\infty}\phi(t)\le 2e\,,
	\]
	where the first equality follows from the monotone convergence theorem. Applying Markov's inequality, we get the desired result.
\end{proof}

Finally we prove Proposition~\ref{prop-EnC(v)2=O(1)} and concludes this paper.

\begin{proof}[Proof of Proposition~\ref{prop-EnC(v)2=O(1)}]
	By Lemmas~\ref{lmm:sd} and~\ref{lmm:bt}, we have
	\begin{equation*}
		\mathcal P'_M\left[ |{\mathcal C}(v)|\geq k\right]\leq \mathbb{P}[|\mathcal T|\ge k] \leq 2e^{-k+1}\,.
	\end{equation*}
	This implies
	\begin{equation}
		\begin{split}
			&\mathbb{E}_{\mathcal P'_M}\left[ |{\mathcal C}(v)|^2 \right]=\sum_{k=1}^\infty  (2k-1)\mathcal P'_M\left[|{\mathcal C}(v)| \geq k\right]
			\leq \sum_{k=1}^\infty 
			4ke^{-k+1}=O(1)\,.
		\end{split}\notag
	\end{equation}
\end{proof}


\section{Discussion and outlook}
\label{sec:discussion}

In this paper,
we prove the changepoint detection threshold is $\tau_n=n-o(\sqrt{n})$, confirming a conjecture of~\cite{bet2023detecting}.
Along the way, we show the changepoint localization threshold is also $\tau_n=n-o(\sqrt{n})$, matching the upper bound in~\cite{bhamidi2018change}.   The key components of our proof  include reducing the general problem to bounding the TV distance when the changepoint occurs at $n-1$ via interpolation, revealing network history up to time $n-o(n)$ to simplify the likelihood ratio, and bounding the second-moment of the likelihood ratio through the  Efron-Stein inequality and coupling arguments.

Our study opens several interesting directions for future research on changepoint detection in random graphs. 
For instance, a natural extension is to explore more general attachment rules, where at time $t$, the probability of connecting $v_t$ to $u$ given $G_{t-1}$ is proportional to $f(\mathsf{deg}_{G_{t-1}}(u))$. 
 In this work, we assume an affine function $f$ and detect changes in the additive shift $\delta$. A close inspection of our proofs reveals that our general proof strategy does not depend on the specific functional form, except for the encoding scheme of the PA model used in the application of the Efron-Stein inequality. Future work could investigate changes in more general functional forms, such as polynomial functions (e.g., changes in degree), or a transition from one arbitrary function $f_1(\cdot)$ to another $f_2(\cdot)$—introducing a structural change in the network evolution process. Additionally, existing research on changepoint detection and location primarily concentrated on models with fixed initial degrees  $m$; it would be interesting to extend the analysis to models with random initial degrees $m$~\cite{deijfen2009preferential}. 
 More broadly, our framework can also be extended to settings where the changepoint occurs only transiently or multiple times~\cite{cirkovic2022likelihood}, and alternative dynamic network models, such as dynamic stochastic block models or dynamic graphon models~\cite{wang2021optimal,enikeeva2025change}.

\section*{Acknowledgements}
The authors are grateful for inspiring discussions on the preferential attachment models with Simiao Jiao, Weijia Li, and Xiaochun Niu. J.~Xu also thanks the organizers of the conference ``Statistical and Probabilistic Analysis of Random Networks and Processes'', where he first learned about the changepoint detection threshold conjecture.

H. Du is partially supported by an NSF-Simons research collaboration grant (award number 2031883). S. ~Gong is partially supported by National Key R\&D program of China (No. 2023YFA1010103) and NSFC Key Program (Project No. 12231002). Part of the work is carried out while S.~Gong is in residence at the Mathematical Sciences Research Institute in Berkeley, California, during the Spring 2025 semester.
J.~Xu is supported in part by an NSF CAREER award CCF-2144593.

	\bibliographystyle{plain} 
	\bibliography{References.bib,network_archaeology_refs.bib}       

\begin{thebibliography}{10}

\bibitem{AddarioBerry2021}
Louigi Addario-Berry, Luc Devroye, Gábor Lugosi, and Vasiliki Velona.
\newblock Broadcasting on random recursive trees.
\newblock {\em Annals of Applied Probability}, 2021.

\bibitem{banerjee2022root}
Sayan Banerjee and Shankar Bhamidi.
\newblock Root finding algorithms and persistence of jordan centrality in
  growing random trees.
\newblock {\em The Annals of Applied Probability}, 32(3):2180--2210, 2022.

\bibitem{banerjee2023fluctuation}
Sayan Banerjee, Shankar Bhamidi, and Iain Carmichael.
\newblock Fluctuation bounds for continuous time branching processes and
  evolution of growing trees with a change point.
\newblock {\em The Annals of Applied Probability}, 33(4):2919--2980, 2023.

\bibitem{banerjee2023degree}
Sayan Banerjee and Xiangying Huang.
\newblock Degree centrality and root finding in growing random networks.
\newblock {\em Electronic Journal of Probability}, 28:1--39, 2023.

\bibitem{barabasi1999emergence}
Albert-L{\'a}szl{\'o} Barab{\'a}si and R{\'e}ka Albert.
\newblock Emergence of scaling in random networks.
\newblock {\em Science}, 286(5439):509--512, 1999.

\bibitem{ben2025inference}
Anna Ben-Hamou and Vasiliki Velona.
\newblock Inference in balanced community modulated recursive trees.
\newblock {\em Bernoulli}, 31(1):457--483, 2025.

\bibitem{bet2023detecting}
Gianmarco Bet, Kay Bogerd, Rui~M Castro, and Remco van~der Hofstad.
\newblock Detecting a late changepoint in the preferential attachment model.
\newblock {\em arXiv preprint arXiv:2310.02603}, 2023.

\bibitem{bhamidi2018change}
Shankar Bhamidi, Jimmy Jin, and Andrew Nobel.
\newblock Change point detection in network models: Preferential attachment and
  long range dependence.
\newblock {\em The Annals of Applied Probability}, 28(1):35--78, 2018.

\bibitem{Bhattacharjee2020}
Monika Bhattacharjee, Moulinath Banerjee, and George Michailidis.
\newblock Change point estimation in a dynamic stochastic block model.
\newblock {\em Journal of Machine Learning Research}, 21(107):1--59, 2020.

\bibitem{Boucheron13}
St{\'{e}}phane Boucheron, G{\'{a}}bor Lugosi, and Pascal Massart.
\newblock {\em Concentration Inequalities - {A} Nonasymptotic Theory of
  Independence}.
\newblock Oxford University Press, 2013.

\bibitem{brandenberger2022root}
Anna~M Brandenberger, Luc Devroye, and Marcel~K Goh.
\newblock Root estimation in galton--watson trees.
\newblock {\em Random Structures \& Algorithms}, 61(3):520--542, 2022.

\bibitem{briend2023archaeology}
Simon Briend, Francisco Calvillo, and G{\'a}bor Lugosi.
\newblock Archaeology of random recursive dags and cooper-frieze random
  networks.
\newblock {\em Combinatorics, Probability and Computing}, 32(6):859--873, 2023.

\bibitem{briend2024estimating}
Simon Briend, Christophe Giraud, G{\'a}bor Lugosi, and D{\'e}borah Sulem.
\newblock Estimating the history of a random recursive tree.
\newblock {\em arXiv preprint arXiv:2403.09755}, 2024.

\bibitem{BubeckDevroyeLugosi2017}
Sébastien Bubeck, Luc Devroye, and Gábor Lugosi.
\newblock Finding adam in random growing trees.
\newblock {\em Random Structures \& Algorithms}, 50(2):158--172, 2017.

\bibitem{BubeckEldanMosselRacz2017}
Sébastien Bubeck, Ronen Eldan, Elchanan Mossel, and Miklós Rácz.
\newblock From trees to seeds: on the inference of the seed from large trees in
  the uniform attachment model.
\newblock {\em Bernoulli}, 23(4A):2887--2916, 2017.

\bibitem{BubeckMosselRacz2015}
Sébastien Bubeck, Elchanan Mossel, and Miklós Rácz.
\newblock On the influence of the seed graph in the preferential attachment
  model.
\newblock {\em IEEE Transactions on Network Science and Engineering},
  2(1):30--39, 2015.

\bibitem{cirkovic2022likelihood}
Daniel Cirkovic, Tiandong Wang, and Xianyang Zhang.
\newblock Likelihood-based inference for random networks with changepoints.
\newblock {\em arXiv preprint arXiv:2206.01076}, 2022.

\bibitem{contat2024eve}
Alice Contat, Nicolas Curien, Perrine Lacroix, Etienne Lasalle, and Vincent
  Rivoirard.
\newblock Eve, adam and the preferential attachment tree.
\newblock {\em Probability Theory and Related Fields}, 190(1):321--336, 2024.

\bibitem{crane2021inference}
Harry Crane and Min Xu.
\newblock Inference on the history of a randomly growing tree.
\newblock {\em Journal of the Royal Statistical Society Series B: Statistical
  Methodology}, 83(4):639--668, 2021.

\bibitem{CurienEtAl2015}
Nicolas Curien, Thomas Duquesne, Igor Kortchemski, and Ioan Manolescu.
\newblock Scaling limits and influence of the seed graph in preferential
  attachment trees.
\newblock {\em Journal de l’École Polytechnique—Mathématiques}, 2:1--34,
  2015.

\bibitem{deijfen2009preferential}
Maria Deijfen, Henri Van Den~Esker, Remco Van Der~Hofstad, and Gerard
  Hooghiemstra.
\newblock A preferential attachment model with random initial degrees.
\newblock {\em Arkiv f{\"o}r matematik}, 47:41--72, 2009.

\bibitem{DevroyeReddad2019}
Luc Devroye and Tommy Reddad.
\newblock On the discovery of the seed in uniform attachment trees.
\newblock {\em Internet Mathematics}, pages 75--93, 2019.

\bibitem{enikeeva2025change}
Farida Enikeeva and Olga Klopp.
\newblock Change-point detection in dynamic networks with missing links.
\newblock {\em Operations Research}, 2025.

\bibitem{gao2017asymptotic}
Fengnan Gao and Aad van~der Vaart.
\newblock On the asymptotic normality of estimating the affine preferential
  attachment network models with random initial degrees.
\newblock {\em Stochastic Processes and their Applications},
  127(11):3754--3775, 2017.

\bibitem{gao2021statistical}
Fengnan Gao and Aad van~der Vaart.
\newblock Statistical inference in parametric preferential attachment trees.
\newblock {\em arXiv preprint arXiv:2111.00832}, 2021.

\bibitem{Gao2017}
Fengnan Gao, Aad van~der Vaart, Rui Castro, and Remco van~der Hofstad.
\newblock {Consistent estimation in general sublinear preferential attachment
  trees}.
\newblock {\em Electronic Journal of Statistics}, 11(2):3979 -- 3999, 2017.

\bibitem{Haigh1970}
John Haigh.
\newblock The recovery of the root of a tree.
\newblock {\em Journal of Applied Probability}, 7(1):79--88, 1970.

\bibitem{kaddouri2024impossibility}
Ibrahim Kaddouri, Zacharie Naulet, and {\'E}lisabeth Gassiat.
\newblock On the impossibility of detecting a late change-point in the
  preferential attachment random graph model.
\newblock {\em arXiv preprint arXiv:2407.18685}, 2024.

\bibitem{KhimLoh2016}
Justin Khim and Po-Ling Loh.
\newblock Confidence sets for the source of a diffusion in regular trees.
\newblock {\em IEEE Transactions on Network Science and Engineering},
  4(1):27--40, 2016.

\bibitem{Korula2014}
Nitish Korula and Silvio Lattanzi.
\newblock An efficient reconciliation algorithm for social networks.
\newblock {\em Proc. VLDB Endow.}, 7(5):377–388, January 2014.

\bibitem{LugosiPereira2019}
Gábor Lugosi and Alan~S. Pereira.
\newblock Finding the seed of uniform attachment trees.
\newblock {\em Electronic Journal of Probability}, 24:1--15, 2019.

\bibitem{magner2018times}
Abram~N Magner, Jithin~K Sreedharan, Ananth~Y Grama, and Wojciech Szpankowski.
\newblock Times: Temporal information maximally extracted from structures.
\newblock In {\em Proceedings of the 2018 World Wide Web Conference}, pages
  389--398, 2018.

\bibitem{polyanskiy2025information}
Yury Polyanskiy and Yihong Wu.
\newblock {\em Information theory: From coding to learning}.
\newblock Cambridge university press, 2025.

\bibitem{racz2022correlated}
Mikl{\'o}s~Z R{\'a}cz and Anirudh Sridhar.
\newblock Correlated randomly growing graphs.
\newblock {\em The Annals of Applied Probability}, 32(2):1058--1111, 2022.

\bibitem{shah2011rumors}
Devavrat Shah and Tauhid Zaman.
\newblock Rumors in a network: Who's the culprit?
\newblock {\em IEEE Transactions on information theory}, 57(8):5163--5181,
  2011.

\bibitem{shah2016finding}
Devavrat Shah and Tauhid Zaman.
\newblock Finding rumor sources on random trees.
\newblock {\em Operations research}, 64(3):736--755, 2016.

\bibitem{Hofstad_2016}
Remco Van Der~Hofstad.
\newblock {\em Random Graphs and Complex Networks (Volume One)}.
\newblock Cambridge Series in Statistical and Probabilistic Mathematics.
  Cambridge University Press, 2016.

\bibitem{Hofstad_2024}
Remco Van Der~Hofstad.
\newblock {\em Random Graphs and Complex Networks (Volume Two)}.
\newblock Cambridge Series in Statistical and Probabilistic Mathematics.
  Cambridge University Press, 2024.

\bibitem{veeravalli2014quickest}
Venugopal~V Veeravalli and Taposh Banerjee.
\newblock Quickest change detection.
\newblock In {\em Academic press library in signal processing}, volume~3, pages
  209--255. Elsevier, 2014.

\bibitem{wang2021optimal}
Daren Wang, Yi~Yu, and Alessandro Rinaldo.
\newblock Optimal change point detection and localization in sparse dynamic
  networks.
\newblock {\em The Annals of Statistics}, 49(1):203--232, 2021.

\bibitem{wang2013locality}
Heng Wang, Minh Tang, Youngser Park, and Carey~E Priebe.
\newblock Locality statistics for anomaly detection in time series of graphs.
\newblock {\em IEEE Transactions on Signal Processing}, 62(3):703--717, 2013.

\bibitem{wu2018statistical}
Yihong Wu and Jiaming Xu.
\newblock Statistical problems with planted structures: Information-theoretical
  and computational limits.
\newblock {\em arXiv preprint arXiv:1806.00118}, 2018.

\bibitem{xie2021sequential}
Liyan Xie, Shaofeng Zou, Yao Xie, and Venugopal~V Veeravalli.
\newblock Sequential (quickest) change detection: Classical results and new
  directions.
\newblock {\em IEEE Journal on Selected Areas in Information Theory},
  2(2):494--514, 2021.

\bibitem{young2019phase}
Jean-Gabriel Young, Guillaume St-Onge, Edward Laurence, Charles Murphy, Laurent
  H{\'e}bert-Dufresne, and Patrick Desrosiers.
\newblock Phase transition in the recoverability of network history.
\newblock {\em Physical Review X}, 9(4):041056, 2019.

\bibitem{zhao2019change}
Zifeng Zhao, Li~Chen, and Lizhen Lin.
\newblock Change-point detection in dynamic networks via graphon estimation.
\newblock {\em arXiv preprint arXiv:1908.01823}, 2019.

\end{thebibliography}

	
\end{document}